\newtheorem{theorem}{Theorem}[section]
\newtheorem{lemma}[theorem]{Lemma}
\newtheorem{proposition}[theorem]{Proposition}
\newtheorem{definition}[theorem]{Definition}
\newtheorem{example}[theorem]{Example}
\newtheorem{remark}[theorem]{Remark}
\newcommand{\R}{\mathbb R}
\newcommand{\F}{\mathbb F}
\begin{document}

\title[Lipschitz contact equivalence]{Lipschitz contact equivalence of function germs in $\R^2$}

\author{Lev Birbrair}
\thanks{L.~Birbrair was partially supported by CNPq-Brazil grant 300575/2010-6}
\address{L.~Birbrair, Departamento de Matemática, Universidade Federal do Ceará (UFC), Campus do Pici, Bloco 914, Cep. 60455-760, Fortaleza-Ce, Brazil}
\email{birb@ufc.br}

\author{Alexandre Fernandes}
\thanks{A.~Fernandes was partially supported by CNPq-Brazil grant 302998/2011-0}
\address{A.~Fernandes, Departamento de Matemática, Universidade Federal do Ceará (UFC), Campus do Pici, Bloco 914, Cep. 60455-760, Fortaleza-Ce, Brazil}
\email{alexandre.fernandes@ufc.br}

\author{Andrei Gabrielov}
\thanks{A.~Gabrielov was partially supported by NSF grant DMS-1161629}
\address{A.~Gabrielov, Department of Mathematics,
Purdue University, West Lafayette, IN 47907, USA}
\email{agabriel@math.purdue.edu}

\author{Vincent Grandjean}
\thanks{V.~Grandjean was partially supported by CNPq-Brazil grant 150555/2011-3}
\address{V.~Grandjean, Departamento de Matemática, Universidade Federal do Ceará (UFC), Campus do Pici, Bloco 914, Cep. 60455-760, Fortaleza-Ce, Brazil}
\email{vgrandje@fields.utoronto.ca}

\keywords{Lipschitz contact equivalence, o-minimal structure}

\subjclass{14P15, 03C64}

\begin{abstract}
In this paper we study Lipschitz contact equivalence of continuous function germs
in the plane definable in a polynomially bounded o-minimal structure,
such as semialgebraic and subanalytic functions.
We partition the germ of the plane at the origin into zones where the function has explicit asymptotic behavior.
Such a partition is called a pizza. We show that each function germ admits a minimal pizza,
unique up to combinatorial equivalence.
We show then that two definable continuous function germs are definably Lipschitz contact
equivalent if and only if their corresponding minimal pizzas are equivalent.
\end{abstract}

\maketitle

\section{Introduction}

Lipschitz geometry of maps is a rapidly growing subject in  contemporary Singularity Theory.
Recent progress in this area is due to the tameness theorems proved by several researchers
(see, for example, \cite{Fukuda}, \cite{BCFR}, \cite{RV}, \cite{HP}).
However description of a set of invariants is barely developed.
This paper presents a classification of the germs of continuous function germs
at the origin of $\R^2$ definable in a polynomially bounded
o-minimal structure (e.g., semialgebraic or subanalytic functions)
with respect to the definable Lipschitz contact equivalence.
This classification is tame, unlike the Lipschitz R-equivalence (see \cite{BCFR} and \cite{RV}).
The most important ingredient of the invariant constructed here is the so-called width function.
Let $f$ be (the germ at the origin of $\R^2$ of) a continuous definable function with $f(0)=0$.
The width $\mu^*(\gamma)$  of (the germ at the origin of) a definable arc $\gamma$ with respect
to $f$ is the minimal order of contact of the ``nearby'' definable arcs  along which $f$ has
the same order as  along $\gamma$.
For any exponent $q$ of the field of exponents $\F$ of the given o-minimal structure,
we define $\mu(q)$ to be the (possibly empty) set of the widths
$\mu^*(\gamma)$ of all arcs $\gamma$ along which $f$ has order $q$.
We show that the multifunction $q\mapsto\mu(q)$ is finite.
The neighborhood of the origin can be divided into finitely many zones so that in each zone
$\mu(q)$ is a well defined (single valued) function. Moreover, this partition can be done so
that in each zone $\mu(q)$ is an affine function  with coefficients in $\F$.
This partition into zones, with the data specifying the sign of $f$ and the affine function $\mu(q)$
for each zone, is called a pizza. A pizza is not unique, but a simplification procedure described
in Section 4 provides a ``minimal'' pizza for the given function $f$,
which is unique up to natural combinatorial equivalence.
The minimal pizza provides a complete invariant for the definable contact Lipschitz equivalence class of $f$.
Our construction is based on the Preparation Theorem
for definable functions in polynomially bounded o-minimal structures (van den Dries and Speissegger \cite{DS}).
Our width function is related to the Newton Boundary of a function on an analytic arc constructed
by Koike and Parusinski \cite{KP}.

\section{Basic definitions}

\begin{definition}\label{defc0k}
{\rm We say that two continuous map germs  $f,g: (\R^n,0) \longrightarrow (\R^p,0)$ are
\emph{Lipschitz contact equivalent} if there exist two germs of
bi-Lipschitz homeomorphisms $h:(\R^n,0) \longrightarrow (\R^n,0)$ and  $H: (\R^n
\times \R^p,0) \longrightarrow (\R^n \times \R^p,0)$ such that
$H(\R^n \times \{0 \}) = \R^n \times \{0 \}$ and the following
diagram is commutative:}

\begin{equation}\label{diagram}
\begin{array}{lllll}
(\R^n,0) & \stackrel{(id,\, f)}{\longrightarrow}
 & (\R^n \times
\R^p,0) & \stackrel{\pi_n}{\longrightarrow} & (\R^n,0) \\
\,\,\,h \, \downarrow &  & \,\,\,\, H \, \downarrow  &  & \,\,\, h \, \downarrow \\
(\R^n,0)& \stackrel{(id, \,g)}{\longrightarrow} & (\R^n \times
\R^p,0) & \stackrel{\pi_n}{\longrightarrow}& (\R^n,0) \\
\end{array}
\end{equation}

\medskip

\noindent {\rm where  $id:\R^n  \longrightarrow \R^n$ is the identity
mapping and $\pi_n: \R^n \times \R^p \longrightarrow \R^n$ is the
canonical projection.}
\end{definition}

In this paper we consider the case $p=1$, thus the maps $f,\,g$ are functions.
There is a more convenient way to work with the contact equivalence of functions, due to the following

\begin{theorem}[\cite{BCFR}]\label{thm:contact-equiv}
 Let $f$ and $g$ be two Lipschitz contact equivalent continuous function germs $(\R^n,0) \to (\R,0)$.
 Then there exists a germ at the origin of a bi-Lipschitz
homeomorphism $\Phi:(\R^n,0) \longrightarrow (\R^n,0)$ such that

\smallskip
\noindent$(\star)$ either $a f \leq  g\circ\Phi \leq b f$, or $a f \leq - g\circ\Phi \leq b f$,
for some positive constants $a$ and $b$.

\smallskip
\noindent If $f$ and $g$ are Lipschitz and satisfy $(\star)$
then they are Lipschitz contact equivalent.
\end{theorem}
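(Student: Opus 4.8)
The plan is to read off the structure that the commutative diagram (\ref{diagram}) imposes when $p=1$; in both directions this structure is essentially ``fibrewise monotone bi-Lipschitz''. For the first implication I would begin by unwinding the diagram. Commutativity of the right-hand square, $\pi_n\circ H=h\circ\pi_n$, forces $H$ to have the form $H(x,t)=(h(x),G(x,t))$ for a continuous germ $G$; the condition $H(\R^n\times\{0\})=\R^n\times\{0\}$ gives $G(x,0)=0$; and commutativity of the left-hand square, $H\circ(\mathrm{id},f)=(\mathrm{id},g)\circ h$, gives $G(x,f(x))=g(h(x))$. Next, for each fixed $x$ the restriction of $H$ to the vertical line $\{x\}\times\R$ is injective (since $H$ is) and surjective onto $\{h(x)\}\times\R$ (since $h$ is a bijection), so $t\mapsto G(x,t)$ is a strictly monotone bi-Lipschitz self-homeomorphism of $\R$ fixing the origin, with the same Lipschitz constant $L$ as $H$; hence $L^{-1}|t|\le|G(x,t)|\le L|t|$ and the sign of $G(x,t)$ equals $\varepsilon$ times the sign of $t$, where $\varepsilon=\pm1$ does not depend on $x$ because $H$ maps the complement of $\R^n\times\{0\}$ to itself and therefore permutes its two connected components. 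Substituting $t=f(x)$ and setting $\Phi:=h$, $a:=1/L$, $b:=L$ then yields $af\le g\circ\Phi\le bf$ when $\varepsilon=1$ and $af\le-g\circ\Phi\le bf$ when $\varepsilon=-1$, i.e.\ $(\star)$.

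For the converse I would first reduce to the first alternative of $(\star)$: the reflection $(x,t)\mapsto(x,-t)$ is a Lipschitz contact equivalence between $g$ and $-g$, so by transitivity it suffices to construct the equivalence assuming $af\le g\circ\Phi\le bf$ (so $f$ and $g\circ\Phi$ have the same sign and are mutually comparable). Take $h:=\Phi$ and $H(x,t)=(\Phi(x),G(x,t))$, where for each $x$ the map $G(x,\cdot)$ is the increasing piecewise-linear self-homeomorphism of $\R$ that fixes $0$, has slope $\theta(x):=g(\Phi(x))/f(x)$ on the segment between $0$ and $f(x)$, and slope $1$ on the two complementary rays (so beyond $f(x)$ it is the translation $t\mapsto t+g(\Phi(x))-f(x)$); when $f(x)=0$ this degenerates consistently to $G(x,t)=t$. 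By construction $G(x,0)=0$, $G(x,f(x))=g(\Phi(x))$, and each $G(x,\cdot)$ is a monotone bijection of $\R$, so $H$ is a continuous bijection of $\R^n\times\R$ preserving $\R^n\times\{0\}$ and inducing $h$ under $\pi_n$; the only remaining point is that $H$ is bi-Lipschitz. Since the slopes of $G(x,\cdot)$ lie in $[\min(a,1),\max(b,1)]$ by $(\star)$, one has uniform bi-Lipschitz control of $G$ in the $t$-variable; writing, for $t\ge0$, $G(x,t)=\int_0^t\rho(x,s)\,ds$ with $\rho(x,s)=\theta(x)$ for $|s|<|f(x)|$ and $\rho(x,s)=1$ otherwise (symmetrically for $t\le0$), one then checks that $x\mapsto G(x,t)$ is Lipschitz with a constant independent of $t$. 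Together with the bi-Lipschitz property of $\Phi$, this makes $H$ bi-Lipschitz.

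The step I expect to be the real obstacle is the last one: the uniform-in-$t$ Lipschitz bound for $x\mapsto G(x,t)$ near the zero set $\{f=0\}$, where $\theta$ may be discontinuous, so $|\theta(x)-\theta(x')|$ cannot be controlled directly. The point is that in $\int_0^t(\rho(x,s)-\rho(x',s))\,ds$ the difference $\theta(x)-\theta(x')$ only appears multiplied by a length $\le\min(|f(x)|,|f(x')|)$, so the dangerous term has the form $|g(\Phi(x))f(x')-g(\Phi(x'))f(x)|/f(x)$; rewriting its numerator as $|g(\Phi(x))(f(x')-f(x))+f(x)(g(\Phi(x))-g(\Phi(x')))|$ and using $|g(\Phi(x))|\le bf(x)$ together with the Lipschitz bounds for $f$ and $g\circ\Phi$ bounds it by a multiple of $|x-x'|$. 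This is exactly where the full strength of $(\star)$ is needed rather than, say, mere equality of zero sets; everything else is bookkeeping with the diagram and elementary facts about monotone bi-Lipschitz maps of $\R$.
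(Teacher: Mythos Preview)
The paper does not contain a proof of this theorem: it is quoted from \cite{BCFR} and used as a black box, so there is no ``paper's own proof'' to compare your attempt against.

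That said, your argument is essentially correct and is the standard way to establish the result. In the first direction you correctly extract from the diagram that $H(x,t)=(h(x),G(x,t))$ with $G(x,0)=0$ and $G(x,f(x))=g(h(x))$, and the key observation that each fibre map $t\mapsto G(x,t)$ is a monotone bi-Lipschitz bijection of a neighbourhood of $0$ in $\R$ with uniform constants yields $(\star)$ immediately; the connectedness argument for the global sign $\varepsilon$ is the right one. One small point of care: everything is a germ, so $G(x,\cdot)$ is only defined on an interval around $0$, not on all of $\R$; this is harmless since you only evaluate at $t=f(x)$, which is small.

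For the converse, your piecewise-linear interpolation $G(x,\cdot)$ sending $0\mapsto 0$ and $f(x)\mapsto g(\Phi(x))$ with slope $1$ outside is exactly the construction one expects, and you have identified the genuine issue: the horizontal Lipschitz estimate near $\{f=0\}$, where $\theta$ need not be continuous. Your resolution is right in spirit---the factor $\theta(x)-\theta(x')$ only appears against a length $\le\min(|f(x)|,|f(x')|)$, and the rewriting
\[
g(\Phi(x))f(x')-g(\Phi(x'))f(x)=g(\Phi(x))(f(x')-f(x))+f(x)\bigl(g(\Phi(x))-g(\Phi(x'))\bigr)
\]
together with $|g(\Phi(x))|\le b|f(x)|$ and the Lipschitz hypotheses on $f$ and $g\circ\Phi$ gives the bound. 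Two details worth tightening in a full write-up: your formula ``$\rho(x,s)=\theta(x)$ for $|s|<|f(x)|$'' should read ``for $s$ between $0$ and $f(x)$'' (the slope is $1$ on the other side of $0$), and you should also treat explicitly the case where $f(x)$ and $f(x')$ have opposite signs (then $|f(x)|,|f(x')|\le|f(x)-f(x')|\le L_f|x-x'|$, which makes all the intermediate terms small). With these adjustments the estimate goes through cleanly.
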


\medskip
For the rest of the paper, we assume $n=2$.

\medskip
In this paper, we consider a polynomially bounded o-minimal structure $\mathcal A$ over $\R$,
with the field of exponents $\F$.
We denote $\F_+$ the set of positive exponents in $\F$.
All functions are assumed to be definable in $\mathcal A$,
and the Lipschitz contact equivalence is assumed to be definable.
This means that $h$ and $H$ in (\ref{diagram}) are definable in $\mathcal A$.
A function $f(x,y)$ is always identified with its germ at the origin of $\R^2$.

An arc $\gamma$ is a continuous definable mapping $\gamma:[0,\epsilon)\to\R^2$ such that $\gamma (0) =0$.
Unless explicitly stated otherwise,
an arc is parameterized by the distance to the origin, i.e., $|\gamma(t)|=t$.
We always consider $\gamma$ as a germ at the origin of $\R^2$.
When it does not lead to confusion, we use the same notation for an arc and its image in $\R^2$.

\begin{definition}{\rm The \emph{order of tangency} $\rm{tord}(\gamma_1,\gamma_2)$ of two distinct arcs $\gamma_1$
and $\gamma_2$, is the exponent $\beta\in \F,\;\beta\ge 1$, defined in the following equation}
 $$|\gamma_1(t)-\gamma_2(t)|=b t^\beta+o(t^\beta),\quad b\ne 0.$$
\end{definition}

\begin{definition}{\rm Let $f\colon(\R^2,0)\rightarrow(\R,0)$ be a continuous function, and $\gamma$ an arc in $\R^2$.
 If $f|_\gamma\not\equiv 0$, \emph{the order of $f$ along $\gamma$}, denoted by $\rm{ord}_{\gamma}(f)$,
 is defined as the exponent $\alpha\in\F_+$ in
$$f(\gamma(t))= at^{\alpha}+o(t^{\alpha}),\quad a\ne 0.$$
If $f|_\gamma\equiv 0$, we set $\rm{ord}_{\gamma}(f)=\infty$.}
\end{definition}

\begin{definition}{\rm
Two arcs $\gamma_1$ and $\gamma_2$ divide the germ of $\R^2$ at the origin into two components.
If $\beta=\rm{tord}(\gamma_1,\gamma_2)>1$ then the closure of the smaller (not containing a half-plane) component
is called a \emph{$\beta$-H\"older triangle}.
If $\rm{tord}(\gamma_1,\gamma_2)=1$ then the closure of each of the two components is called
a \emph{$1$-H\"older triangle}. The number $\beta\in\F$ is called the \emph{exponent} of the H\"older triangle.
The arcs $\gamma_1$ and $\gamma_2$ are called the \emph{sides} of the H\"older triangle.
We denote by $T(\gamma_1,\gamma_2)$ a H\"older triangle bounded by $\gamma_1$ and $\gamma_2$.}
\end{definition}

 Let $T\subset (\R^2,0)$ be a  H\"older triangle, and let $f\colon T\rightarrow(\R,0)$ be a continuous function.
Define
\begin{equation}\label{Q}
Q_f(T)=\bigcup_{\gamma\subset T}\rm{ord}_{\gamma}(f).
\end{equation}

\begin{proposition} For a H\"older triangle $T$, $Q_f(T)$ is a segment  in $\F_+\cup\{\infty\}$.
\end{proposition}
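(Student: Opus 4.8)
The plan is to realize $Q_f(T)$ as a \emph{cluster set} of a single definable function and then to exploit the connectedness of the punctured H\"older triangle. Fix $\epsilon<1$ small and set $\psi(p)=\log|f(p)|/\log|p|$ for $p\in (T\cap B_\epsilon)\setminus\{0\}$, with the convention $\psi(p)=+\infty$ when $f(p)=0$. Since $0<|p|<1$ forces $\log|p|<0$, the function $\psi$ is definable and continuous into $[-\infty,+\infty]$: at a point $p_0$ with $f(p_0)=0$ one has $\log|f(p)|\to-\infty$ while $\log|p|\to\log|p_0|<0$, so $\psi(p)\to+\infty$. If $\gamma\subset T$ is an arc parametrized by distance to the origin and $f|_\gamma\not\equiv 0$, then from $f(\gamma(t))=at^\alpha+o(t^\alpha)$, $a\ne 0$, one gets $\psi(\gamma(t))\to\alpha=\mathrm{ord}_\gamma(f)$; and if $f|_\gamma\equiv 0$ then $\psi(\gamma(t))\equiv+\infty=\mathrm{ord}_\gamma(f)$. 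Hence $\mathrm{ord}_\gamma(f)=\lim_{t\to 0^+}\psi(\gamma(t))$ for every arc $\gamma\subset T$, which gives $Q_f(T)\subseteq C:=\bigcap_{0<\epsilon'\le\epsilon}\overline{\psi((T\cap B_{\epsilon'})\setminus\{0\})}$, the cluster set of $\psi$ at the origin.

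The reverse inclusion $C\subseteq Q_f(T)$ would be obtained from the curve selection lemma. If $v\in C$ is finite, then $(0,v)$ lies in the closure of the definable set $\{(p,\psi(p)):p\in (T\cap B_\epsilon)\setminus\{0\},\ f(p)\ne 0\}$, so curve selection provides a definable arc $\tau\mapsto\gamma_0(\tau)$ in $T$ with $\gamma_0(0)=0$, $\gamma_0(\tau)\ne 0$ for $\tau>0$, and $\psi(\gamma_0(\tau))\to v$; reparametrizing $\gamma_0$ by distance to the origin (the monotonicity theorem makes $\tau\mapsto|\gamma_0(\tau)|$ a homeomorphism onto an interval) yields an arc $\gamma\subset T$ with $\mathrm{ord}_\gamma(f)=\lim\psi(\gamma(t))=v$, so $v\in\F_+\cap Q_f(T)$. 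If $+\infty\in C$, then $\psi$ is unbounded on $(T\cap B_{\epsilon'})\setminus\{0\}$ for every $\epsilon'$; by the \L ojasiewicz inequality the germ $f^{-1}(0)\cap T$ cannot equal $\{0\}$ (otherwise $|f(p)|\ge c|p|^N$ would make $\psi$ bounded near $0$), hence the origin lies in the closure of $(f^{-1}(0)\cap T)\setminus\{0\}$, and curve selection gives an arc $\gamma\subset T$ with $f|_\gamma\equiv 0$, i.e. $\mathrm{ord}_\gamma(f)=\infty$. Thus $Q_f(T)=C$.

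It remains to see that $C$ is a closed segment. For all sufficiently small $\epsilon'$ the set $(T\cap B_{\epsilon'})\setminus\{0\}$ is connected (each circle $S_t$ meets $T$ in a single closed sub-arc, and these sub-arcs are linked together along a side of $T$), so its image under the continuous map $\psi$ is a connected subset of $[-\infty,+\infty]$, i.e. an interval, and its closure $\overline{\psi((T\cap B_{\epsilon'})\setminus\{0\})}$ is a nonempty closed subinterval of the compact space $[-\infty,+\infty]$. These closed intervals are nested as $\epsilon'\downarrow 0$, so $C$ is a nonempty closed interval; since $Q_f(T)=C\subseteq\F_+\cup\{\infty\}$, its endpoints are orders of arcs and lie in $\F_+\cup\{\infty\}$, whence $Q_f(T)=\{q\in\F_+\cup\{\infty\}:a\le q\le b\}$ for some $a\le b$, as claimed. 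The step I expect to be the main obstacle is the inclusion $C\subseteq Q_f(T)$: one must pass from a merely sequential cluster value to an honest definable arc via curve selection and reparametrization, and deal with the value $+\infty$ separately through the \L ojasiewicz inequality. As an alternative route to the "no gaps" part: given arcs $\gamma_1,\gamma_2\subset T$ with $\mathrm{ord}_{\gamma_1}f=q_1<q<q_2=\mathrm{ord}_{\gamma_2}f$ and $q\in\F$, the definable function $p\mapsto|f(p)|-|p|^q$ is positive at $\gamma_1(t)$ and negative at $\gamma_2(t)$ for small $t>0$, so by the intermediate value theorem on the arc $T\cap S_t$ it vanishes at some $p_t$ with $|p_t|=t$; curve selection applied to $\{p\in T:|f(p)|=|p|^q\}$ then produces an arc $\gamma\subset T$ with $\mathrm{ord}_\gamma(f)=q$.
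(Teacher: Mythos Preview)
Your ``alternative route'' at the end is correct and is precisely the paper's own argument, stated with a bit more care: for $q\in(q_1,q_2)\cap\F$ one compares $|f(p)|$ with $|p|^q=(x^2+y^2)^{q/2}$, observes the sign change along the circular arc $T\cap S_t$, and then applies the arc-selection lemma to the definable locus $\{p\in T:|f(p)|=|p|^q\}$ to obtain an arc $\gamma\subset T$ with $\mathrm{ord}_\gamma(f)=q$. This alone already proves the proposition.

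Your main argument via $\psi(p)=\log|f(p)|/\log|p|$, however, has a genuine gap. In a polynomially bounded o-minimal structure the logarithm is \emph{not} definable (if it were, its inverse $\exp$ would be definable as well, contradicting polynomial boundedness). Hence $\psi$ is not a definable function, the set $\{(p,\psi(p)):p\in(T\cap B_\epsilon)\setminus\{0\},\ f(p)\ne 0\}$ that you call ``definable'' is not, and you cannot invoke o-minimal curve selection on its closure to produce an arc realizing a prescribed cluster value $v$. The purely topological part of your argument---that the image of a connected set under the continuous $[-\infty,+\infty]$-valued map $\psi$ is an interval, so the cluster set $C$ is a closed interval---does survive, as does the easy inclusion $Q_f(T)\subseteq C$; but the reverse inclusion $C\subseteq Q_f(T)$ is exactly where definability is needed and fails. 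Note also that the proposition only claims $Q_f(T)$ is a segment; the paper explicitly says ``We will show later that $Q_f(T)$ is a closed segment'' (this comes out of the preparation theorem). Your attempt to get closedness here is an extra, and it is precisely this extra conclusion that rests on the broken step.
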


\begin{proof} Suppose that $q_1,q_2\in Q_f(T)$ and let $q\in (q_1,q_2)\cap\F_+$.
Let $h\colon (\R^2,0)\rightarrow(\R,0)$ be a continuous function defined by $h(x,y)=(x^2+y^2)^{q/2}$.
Since the intersection of the graphs of $f_{| T}$ and $h_{| T}$, as a germ at $0\in\R^3$,
does not reduce to the origin, the arc-selection lemma implies that there exists an arc $\gamma$ in $T$
such that $\rm{ord}_{\gamma}(f)=q$.
\end{proof}

We will show later that $Q_f(T)$ is a closed segment.

\begin{definition} {\rm A H\"older triangle $T$ is called \emph{elementary} with respect to the function $f$ if, for any two
disjoint arcs $\gamma_1$ and $\gamma_2$ in $T$ such that $\rm{ord}_{\gamma_1}(f)=\rm{ord}_{\gamma_2}(f)=q$,
the order of $f$ is $q$ on any arc in the H\"older triangle $T (\gamma_1,\gamma_2)\subset T$.}
\end{definition}

\begin{definition}\label{df:width}
{\rm Let $f\colon(\R^2,0)\rightarrow(\R,0)$ be a continuous function. For each arc $\gamma$, \emph{the width} of $\gamma$
with respect to $f$ is the infimum $\mu^*(\gamma,f)$ of the exponents of H\"older triangles $\tilde T$ containing $\gamma$
such that $Q_f(\tilde T)$ is a point.}

{\rm Let $T$ be a H\"older triangle. The \emph{relative width} of an arc $\gamma\subset T$, with respect to $f$
and $T$, is the infimum $\mu_T^*(\gamma,f)$ of the exponents of H\"older triangles $\tilde{T}$
such that $\gamma\subset\tilde{T}\subset T$ and $Q_f(\tilde T)$ is a point.}

{\rm The multivalued \emph{width function} $\mu_{T,f}\colon Q_f(T)\rightarrow\F\cup\{\infty\},\;\mu_{T,f}\ge\beta,$ is defined as follows.
For $q\in Q_f(T)$, we define $\mu_{T,f}(q)$ as the (finite) set
 of exponents $\mu_T^*(\gamma,f)$, where $\gamma$ is any arc in $T$ such that $\rm{ord}_\gamma(f)=q$.}
\end{definition}

We will show (see Lemma \ref{lem:width-affine} below) that these infima $\mu^*(\gamma,f)$ and $\mu^*_T(\gamma,f)$
are both minima and belong to $\F_+\cup\{\infty\}$.

\begin{remark}\label{rem:order-width}
{\rm Let $f,g : (\R^2,0)\to (\R,0)$ be two continuous function germs which are Lipschitz contact equivalent.
Let $\Phi$ be the bi-Lipschitz homeomorphism of Theorem \ref{thm:contact-equiv}.
For any arc $\gamma$, let $\tilde\gamma=\Phi(\gamma)$.
Then, $\rm{ord}_{\tilde\gamma} (g) = \rm{ord}_\gamma (f)$ and $\mu^*(\tilde\gamma,g) = \mu^*(\gamma,f)$.}
\end{remark}

\medskip\noindent
{\bf Notation.}  When the function germ $f$ is fixed, we write $\mu^*(\gamma)$
and $\mu_T^*(\gamma)$ instead of $\mu^*(\gamma,f)$ and $\mu_T^*(\gamma,f)$,
respectively. We also write $\mu_T$ instead of $\mu_{T,f}$.

\begin{remark}
{\rm If $T$ is an elementary triangle then $\mu_T$ is single valued.}
\end{remark}

\begin{definition}{\rm  A \emph{H\"older complex} on $\R^2$ is a (definable) triangulation of the germ of $\R^2$
at the origin. Two H\"older complexes are \emph{combinatorially equivalent} when there exists
a bijection between their sets of triangles that either preserves or reverses their cyclic order,
and preserves their H\"older exponents (see \cite{B}). A combinatorial type of a H\"older complex can be defined as a finite sequence of exponents $\beta_i\in\mathbb{F}$; $\beta_i\geq 1$, considered with the cyclic order.
At least one of the exponents $\beta_i$ is equal to $1$.
The sequence $\{\beta_i\}$ is called \emph{an abstract H\"older complex}.
A H\"older complex $\{T_i\}$ corresponds to an abstract H\"older complex $\{\beta_i\}$ if the exponent of $T_i$ is equal to $\beta_i$, for all $i$. }
\end{definition}

\begin{definition}\label{abstract}
{\rm An \emph{abstract pizza} is a finite collection $\mathcal{H}=\{\beta_i,Q_i,s_i,\mu_i\}_{i\in I}$,
where $I=\{1,\dots,k\}\mod{k}$ is considered with the cyclic order, and}
\begin{enumerate}
\item {\rm $\{\beta_i\}$ is an abstract H\"older complex  on $\R^2$ at the origin};
\item {\rm each $Q_i$ is a closed directed segment of $\mathbb{F}_+\cup\{\infty\}$, where ``directed'' means that $Q_i=[a_i,b_i]$ with either $a_i<b_i$ or $a_i>b_i$ (or $a_i=b_i$ when $Q_i$ is a point) satisfying the continuity condition $a_{i+1}=b_i$ for all $i$;}
\item {\rm each $s_i$ is a sign $+$, $-$ or $0$, with $s_i=s_{i+1}$ unless $b_i=a_{i+1}=\infty$;}
\item {\rm $\mu_i\colon Q_i\rightarrow\mathbb{F}\cup\{\infty\}$ is an affine function, such that $\min(\mu_i(a_i),\mu_i(b_i))=\beta_i$ for each $i$.}
\end{enumerate}
\end{definition}

\begin{definition}{\rm A pizza $\mathcal{H}=\{\beta_i,Q_i,s_i,\mu_i\}_{i\in I}$
is associated with a continuous function germ $f\colon(\R^2,0)\rightarrow(\R,0)$ if
there exists a H\"older complex $\{T_i\}_{i\in I}$ on $\R^2$ where each $T_i=T(\gamma_i,\gamma_{i+1})$ is a
$\beta_i$-H\"older triangle elementary with respect to $f$,
and the arcs $\gamma_i$ are either
counterclockwise or clockwise oriented with respect to the cyclic order on $I$, such that}
\begin{enumerate}
\item {\rm $Q_i=Q(T_i)$;}
\item {\rm for each arc $\gamma\subset T_i$, $\mu_{T_i}^*(\gamma)=\mu_i(\rm{ord}_{\gamma}(f))$;}
\item {\rm the sign of $f$ on the interior of $T_i$ is $s_i$.}
\end{enumerate}
\end{definition}

\begin{definition}{\rm Two pizzas $\mathcal{H}=\{\beta_i,Q_i,s_i,\mu_i\}_{i=1}^{k}$ and $\mathcal{H}'=\{\beta'_j,Q'_j,s'_j,\mu'_j\}_{j=1}^{k'}$ (see Fig.~\ref{equiv}) are called \emph{combinatorially equivalent} (or simply {\em equivalent}) if $k=k'$ and there is a combinatorial equivalence $i\mapsto j(i)$ of the corresponding H\"older complexes associating $T'_{j(i)}$ to $T_i$, such that}
\begin{enumerate}
\item {\rm either $s'_{j(i)}=s_i$ or $s'_{j(i)}=-s_i$ for all $i$;}
\item {\rm $Q'_{j(i)} = Q_i$ for all $i$ if $i\mapsto j(i)$ preserves the cyclic order,
or $Q'_{j(i)} = -Q_i$ for all $i$ (where $-Q_i$ means $Q_i$ with the opposite direction) if the cyclic order
is reversed;}
\item $\mu'_{j(i)}=\mu_i$ for all $i$.
\end{enumerate}
\end{definition}
 \begin{figure}
 \centering
 \includegraphics[width=5in]{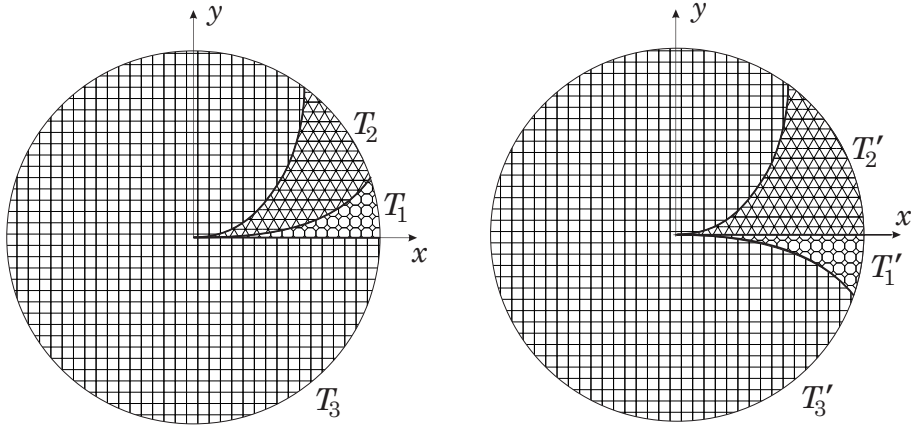}
 \caption{Equivalence of pizzas.}\label{equiv}
 \end{figure}

\section{Main theorem.}
\begin{theorem}\label{theorem1} For any continuous definable function germ $f\colon(\R^2,0)\rightarrow(\R,0)$, there exists a pizza associated with $f$.
\end{theorem}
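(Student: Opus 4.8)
The plan is to construct the H\"older complex underlying a pizza by refining an arbitrary definable triangulation of $(\R^2,0)$ and using the Preparation Theorem to control $f$ on each piece. Since a pizza is assembled from its triangles, it suffices to start from a H\"older complex with at least one triangle of exponent $1$ (any definable triangulation of $(\R^2,0)$ has this property) and to refine each of its H\"older triangles $T$ into H\"older triangles $T''$ so that each $T''$ is elementary with respect to $f$, the sign of $f$ is constant on the interior of $T''$, $Q_f(T'')$ is a closed segment, and $\gamma\mapsto\mu^*_{T''}(\gamma)$ is an affine function of $\mathrm{ord}_\gamma(f)$ satisfying Definition \ref{abstract}(4); the cyclic, continuity and sign-compatibility conditions of Definition \ref{abstract} will then follow from a consistent choice of orientations of the segments $Q_f(T'')$ and from the fact that a continuous function has constant sign near any arc along which it does not vanish. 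First I would normalize $T$: after a rotation and a further subdivision along finitely many arcs, I may assume that both sides of $T$ are graphs $y=\phi_j(x)$ over $x\ge 0$ and that the $x$-coordinate of $\gamma(t)$ has order $t$ for every arc $\gamma\subset T$. For such arcs and any definable $\theta$ with $\theta(x)=O(x)$ one has $\mathrm{ord}_t\bigl(y(t)-\theta(x(t))\bigr)=\mathrm{tord}(\gamma,\tilde\theta)$, where $\tilde\theta=\{y=\theta(x)\}$; this elementary o-minimal computation will be used throughout.

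Next I would apply the Preparation Theorem of van den Dries and Speissegger \cite{DS} to $f(x,y)$ with $y$ as the distinguished variable. This produces a finite subdivision of $T$ into H\"older triangles on each of which $f(x,y)=\varepsilon\,|x|^{\alpha}\prod_k|y-\theta_k(x)|^{\nu_k}\,U(x,y)$, with $\varepsilon\in\{\pm1\}$, $\alpha,\nu_k\in\F$, $\nu_k>0$, the $\theta_k$ definable with $\theta_k(x)=O(x)$, and $U$ a unit (bounded between two positive constants near $0$). I would then cut each such piece along the characteristic arcs $\tilde\theta_k=\{y=\theta_k(x)\}$, along the arcs realizing the pairwise orders of tangency of the $\tilde\theta_k$, and along further auxiliary arcs, arranging that on each resulting H\"older triangle $T''$ either $\mathrm{ord}_\gamma(f)$ is constant over $\gamma\subset T''$ (the trivial case), or exactly one characteristic arc $\tilde\theta$, with multiplicity $\nu>0$, is a side of $T''$ and the total contribution $\sum_{k}\nu_k\,\mathrm{tord}(\gamma,\tilde\theta_k)$ of the remaining characteristic arcs is constant over $\gamma\subset T''$. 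By the order--tangency computation, in the nontrivial case $\mathrm{ord}_\gamma(f)=\alpha''+\nu\,\mathrm{tord}(\gamma,\tilde\theta)$ for all $\gamma\subset T''$, where $\alpha''\in\F$ absorbs $\alpha$ and the constant contributions; and since $\{f=0\}$ is contained in the union of the characteristic arcs, the sign of $f$ on the interior of each $T''$ is constant.

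It remains to read off the pizza data on a nontrivial piece $T''=T(\tilde\theta,\delta)$ of exponent $\beta''$; on a trivial piece $\mathrm{ord}_\gamma(f)\equiv\alpha''$, $Q_f(T'')=\{\alpha''\}$ and $\mu^*_{T''}\equiv\beta''$, which is of pizza type. Using that all arcs in a H\"older triangle of exponent $\beta''$ are pairwise tangent to order $\ge\beta''$, together with the ultrametric inequality $\mathrm{tord}(\gamma_1,\gamma_3)\ge\min(\mathrm{tord}(\gamma_1,\gamma_2),\mathrm{tord}(\gamma_2,\gamma_3))$, I would check: (i) $\mathrm{tord}(\gamma,\tilde\theta)$ ranges exactly over $[\beta'',\infty]$ as $\gamma$ ranges over $T''$, so $Q_f(T'')=[\alpha''+\nu\beta'',\infty]$ is a closed segment, which I direct towards $\tilde\theta$; (ii) if $\mathrm{ord}_{\gamma_1}(f)=\mathrm{ord}_{\gamma_2}(f)=q$ for $\gamma_1,\gamma_2\subset T''$ then $\mathrm{tord}(\gamma_i,\tilde\theta)=(q-\alpha'')/\nu=:\tau_0$, $\tilde\theta$ lies outside $T(\gamma_1,\gamma_2)$, every arc of $T(\gamma_1,\gamma_2)$ is tangent to order exactly $\tau_0$ with $\tilde\theta$ and hence has $f$-order $q$, so $T''$ is elementary; (iii) for $\gamma\subset T''$ with $\mathrm{ord}_\gamma(f)=q$, the arcs of $T''$ tangent to order exactly $\tau_0=(q-\alpha'')/\nu$ with $\tilde\theta$ fill a H\"older triangle of exponent $\tau_0$ through $\gamma$ contained in $T''$, whereas any sub-triangle of $T''$ through $\gamma$ of exponent $<\tau_0$ contains an arc at strictly smaller order of tangency with $\tilde\theta$, hence of different $f$-order; therefore $\mu^*_{T''}(\gamma)=\tau_0$ and $\mu_{T''}$ is the affine function $q\mapsto(q-\alpha'')/\nu$, with $\min\bigl(\mu_{T''}(\alpha''+\nu\beta''),\mu_{T''}(\infty)\bigr)=\beta''$. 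Gluing the triangles $T''$ around the origin in their cyclic order and orienting the segments $Q_f(T'')$ consistently then yields a pizza associated with $f$.

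The hard part will be the subdivision step of the second paragraph: passing from the multi-factor prepared form to pieces governed by a single characteristic arc. This requires analyzing, for arcs lying between consecutive characteristic arcs, the piecewise-affine behaviour of $\gamma\mapsto\sum_k\nu_k\,\mathrm{tord}(\gamma,\tilde\theta_k)$ and cutting $T$ along enough auxiliary arcs to make a single term dominate on each piece, all while preserving the normalization that the $x$-coordinate dominates and keeping $\{f=0\}$ in the $1$-skeleton. Treating $1$-H\"older triangles, where the normalization after a rotation may degenerate, will need an additional preliminary subdivision into pieces on which one coordinate dominates before the Preparation Theorem is applied in the stated form.
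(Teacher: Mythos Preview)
Your overall strategy---use preparation to get $\mathrm{ord}_\gamma(f)$ as an affine function of the tangency order to a distinguished arc, then read off the width---is exactly the paper's, and your checks (ii)--(iii) are essentially the content of the paper's Lemma~\ref{lem:width-affine}. The difference is in the form of preparation you invoke, and this is where there is a genuine gap.

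The van den Dries--Speissegger theorem, as the paper states it (Theorem~\ref{thm:prep}), produces on each cell a \emph{single}-factor form
\[
f(x,y)=(y-\theta(x))^{\lambda}\,a(x)\,u(x,y),\qquad \lambda\in\F,
\]
with $\lambda$ allowed to be zero or negative and with $\{y=\theta(x)\}$ either outside the cell or on its boundary. It does \emph{not} deliver the multi-factor form $\varepsilon|x|^{\alpha}\prod_k|y-\theta_k(x)|^{\nu_k}U$ with all $\nu_k>0$ that you assume. The positivity hypothesis is not a harmless normalization: on your nontrivial pieces you get $\mu_{T''}(q)=(q-\alpha'')/\nu$ with $\nu>0$, hence $\mu$ is always \emph{increasing} in $q$, and $Q_f(T'')$ always reaches $\infty$. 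But Definition~\ref{abstract} allows, and Example~\ref{example4} exhibits, a decreasing width function: for $g=(x^6+y^6)/(x^4+y^2)$ on the triangle between $\{y=x\}$ and $\{y=x^2\}$ one has $g\sim x^{6}y^{-2}$ (a negative exponent on the $y$-factor) and $\mu(q)=3-q/2$. No further subdivision helps, since $\mathrm{ord}_\gamma(g)$ is strictly monotone across the whole piece and there is no zero of $g$ to serve as a boundary characteristic arc.

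So the ``hard part'' is not the combinatorics of reducing a multi-factor product to a single dominant factor; it is that you must allow the single exponent $\lambda$ to take any sign, with the distinguished arc possibly lying \emph{outside} the piece. Once you use the single-factor \cite{DS} form as in the paper, the order formula $\mathrm{ord}_\gamma(f)=\lambda\,\mathrm{tord}(\gamma,\beta)+r$ and your arguments (ii)--(iii) go through (with $\mathrm{tord}(\gamma,\beta)$ bounded when $\beta\not\subset T$), and the entire multi-factor bookkeeping and the auxiliary cuts you flag as difficult simply disappear.
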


\begin{proof}
The existence of a pizza associated with $f$ uses a special case of the Preparation Theorem of van den Dries and Speissegger \cite{DS}.
Namely,

\begin{theorem}[\cite{DS}]\label{thm:prep}
Let $f\colon(\R^2,0)\rightarrow(\R,0)$ be a definable and continuous function.
There exists a finite decomposition $\mathcal{C}$ of $\R^2$, as a germ at $0$, and for each $T\in\mathcal{C}$
there exists an exponent $\lambda\in\mathbb{F}$ and definable functions $\theta,a\colon(\R,0)\rightarrow\R$
and $u\colon(\R^2,0)\rightarrow\R$, such that for $(x,y)\in T$ we have
\begin{equation}\label{prep}
f(x,y)=(y-\theta(x))^{\lambda}a(x)u(x,y), \quad |u(x,y)-1|<\frac{1}{2}.
\end{equation}
Up to refining, we can further require that the set $\{y=\theta(x)\}$ is either outside $T$
or on its boundary.
\end{theorem}

The Preparation Theorem \ref{thm:prep} specifies a special direction, that of the variable $y$, with respect to which we
can prepare the function of interest in the form given in Equation (\ref{prep}), mimicking the classical Weierstrass
Preparation for a complex function germ.
Thus we get the decomposition $\mathcal{C}_y$ into definable cells.
Preparing the function with respect to the direction of the variable $x$ is also possible, but gives
rise to a second decomposition $\mathcal{C}_x$, different from $\mathcal{C}_y$.
Nevertheless we can refine $\mathcal{C}_y$
so that each cell of the refined decomposition $\mathcal{C}$ is contained in a cell of $\mathcal{C}_x$.
Thus the function $f$ may be prepared with respect to both $x$-direction and $y$-direction
in each cell of $\mathcal{C}$.

We may further assume that each cell $C$ of $\mathcal{C}$ satisfies the
following property: either there is no arc contained in $C$ tangent to the $y$-axis, or there is no arc contained in $C$ tangent to the $x$-axis.

\smallskip
Let $C$ be a cell of $\mathcal{C}$. Up to permuting the $x$ and $y$ coordinates,
we can assume that the function $f$ is prepared in $C$ with respect to the $y$-direction, there is no arc
in $C$ tangent to the $y$-axis, and the curve $\beta=\{y=\theta(x)\}$ is not tangent to the $y$-axis.
A simple but important consequence of this property of $C$ is that there is a positive constant $K$
such that for $(x,y)\in C$, we have
\begin{center}
$|(x,y)| \ll 1 \Longrightarrow |(x,y)| \leq K |x|$.
\end{center}
Then, for any arc $t\to\gamma(t) = (x(t),y(t)) \in C$, we have $|x(t)| \leq t \leq K |x(t)|$.

\smallskip
Since there is no arc in $C$ tangent to the $y$-axis, we can assume that $C$ is contained in
the half-plane $\{x\geq 0\}$.

Let $T$ be the closure of $C$, and $\gamma$ an arc in $T$.
Then $\gamma=\{y=\theta(x)+b\,x^{\rm{tord}(\gamma,\beta)}+o(x^{\rm{tord}(\gamma,\beta)})\}$.
Since $a(x)=c\,x^r+o(x^r)$, Equation (\ref{prep}) implies
\begin{equation}\label{order-f}
\rm{ord}_\gamma(f)=\lambda \cdot \rm{tord}(\gamma,\beta) + r.
\end{equation}
Let $R(T):=[\rm{tord}(\gamma_1,\beta),\,\rm{tord}(\gamma_2,\beta)]\cap (\mathbb{F}\cup \{+\infty\})$,
where $\gamma_1$ and $\gamma_2$ are the boundary arcs of $T$.
If $R(T)$ consists of a single point, or if $\lambda=0$,
then $Q_f(T)$ is a single point. Otherwise,
we define the function $\rho: R(T) \to \mathbb{F}\cup \{+\infty\}$
as $\rho(q): =(q-r)/\lambda$.  It is an affine function on $Q_f(T)$.  Note that
$\rho(q)=\rm{tord}(\gamma,\beta)$ for any arc $\gamma\subset T$ such that $\rm{ord}_\gamma(f)=q$.

\begin{lemma}\label{lem:width-affine}
The following equality holds: $\rho(q)=\mu_T(q)$ for all $q\in Q_f(T)$.
\end{lemma}

\begin{proof} Suppose that $\mu_T(q)<\rho(q)$ (see Fig.~\ref{fig2}a). Let $\gamma$ be an arc in $T$ such that
$\rm{ord}_\gamma(f)=q$ and $\mu_T^*(\gamma)<\rm{tord}(\gamma,\gamma_1)$,
where $\gamma_1$ is the side of $T$ closest to $\beta$.

Then, there exists an arc $\tilde{\gamma}$, such that $\rm{tord}(\gamma,\tilde{\gamma})=\mu_T^*(\gamma)$, $\rm{ord}_{\tilde{\gamma}}(f)=q$ and $\rm{tord}(\tilde{\gamma},\gamma_1)\neq\rm{tord}(\gamma,\gamma_1)$. It contradicts the fact that $\rho(q)$ is single valued.

Suppose that $\mu_T(q)>\rho(q)$ (see Fig.~\ref{fig2}b).
Let $\gamma$ be an arc in $T$ such that $\rm{ord}_\gamma(f)=q$ and $\mu_T^*(\gamma)>\rm{tord}(\gamma,\gamma_1)$.

Then, one can consider an arc $\tilde{\gamma}$ in $T$ such that
$\rm{tord}(\tilde{\gamma},\gamma)=\rm{tord}(\tilde{\gamma},\gamma_1)=\rm{tord}(\gamma,\gamma_1)$.
Since $T$ is an elementary triangle, one cannot have $\rm{ord}_{\tilde{\gamma}}(f)=\rm{ord}_\gamma(f)$.
But this also contradicts the fact that $\rho(q)$ is single valued.
\end{proof}

\begin{figure}
\centering
\includegraphics[width=5in]{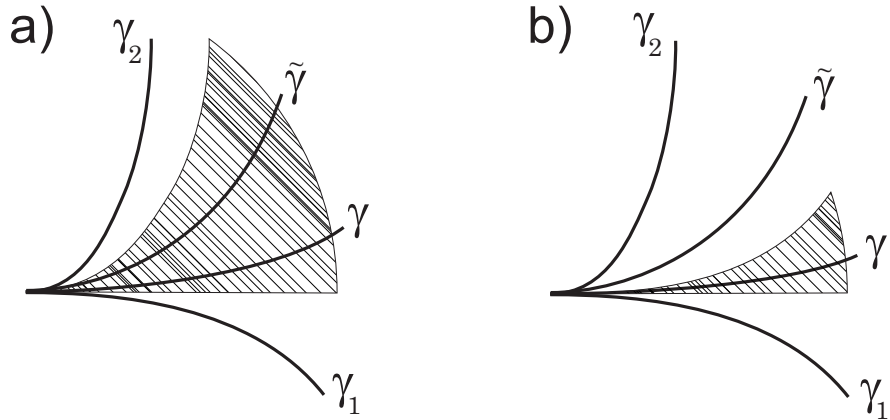}
\caption{Arcs $\gamma$ and $\tilde\gamma$ in the proof of Lemma \ref{lem:width-affine}. a) Case $\mu_T(q)<\rho(q)$. b) Case $\mu_T(q)>\rho(q)$.}\label{fig2}
\end{figure}

In Lemma \ref{lem:width-affine} we constructed a H\"older complex such that each triangle $T$
is elementary with respect to the function $f$,
the width function $\mu_T:\mathbb{F}\cup\{\infty\}\to\mathbb{F}\cup\{\infty\}$, referred to as $\rho$ in the proof of Lemma \ref{lem:width-affine}, is affine, $\mu_T\ge\beta$
where $\beta$ is the exponent of $T$. The sign of $f$ inside $T$ is clearly fixed.
If $Q_f(T)$ is not a point then $\mu_T$ is not constant. If $T=T_i$ is bounded by the arcs $\gamma_1$ and $\gamma_2$ so that the pair $\gamma_1$, $\gamma_2$ is counterclockwise oriented,
we set $a_i=\rm{ord}_{\gamma_1}(f)$ and $b_i=\rm{ord}_{\gamma_2}(f)$, $Q_i=[a_i,b_i]$.
The continuity condition $a_{i+1}=b_i$ follows from the continuity of $f$.
This completes the proof of Theorem \ref{theorem1}.
\end{proof}

\begin{remark}
{\rm The proof of Lemma \ref{lem:width-affine} shows that the width $\mu_T^*(\gamma,f)$ is a minimum,
i.e., there exists an arc $\gamma'$ in $T$ such that $\mu_T^*(\gamma,f)={\rm tord}(\gamma,\gamma')$.}
\end{remark}

\begin{theorem}\label{theorem2} Let $f,g\colon(\R^2,0)\rightarrow(\R,0)$ be germs of continuous definable functions.
If $f$ is Lipschitz contact equivalent to $g$, then for each pizza
$\mathcal{H}=\{\beta_i,Q_i,s_i,\mu_i\}_{i=1}^{k}$ associated with $f$ there is a pizza
$\mathcal{H}'$ associated with $g$, and equivalent to $\Pi$.
\end{theorem}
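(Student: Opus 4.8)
The plan is to push the H\"older complex realizing a pizza for $f$ forward along the bi-Lipschitz homeomorphism supplied by Theorem \ref{thm:contact-equiv} and to verify directly that its image realizes an equivalent pizza for $g$. Fix a pizza $\mathcal H=\{\beta_i,Q_i,s_i,\mu_i\}_{i=1}^k$ associated with $f$, realized by a H\"older complex $\{T_i\}_{i\in I}$ with $T_i=T(\gamma_i,\gamma_{i+1})$ elementary for $f$, and let $\Phi$ be the bi-Lipschitz homeomorphism of Theorem \ref{thm:contact-equiv}, so that $af\le\pm g\circ\Phi\le bf$ and, by Remark \ref{rem:order-width}, $\mathrm{ord}_{\Phi(\gamma)}(g)=\mathrm{ord}_\gamma(f)$ and $\mu^*(\Phi(\gamma),g)=\mu^*(\gamma,f)$ for every arc $\gamma$. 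Put $\gamma_i'=\Phi(\gamma_i)$ and $T_i'=\Phi(T_i)$. Beyond Remark \ref{rem:order-width} the only geometric input I need is that a definable bi-Lipschitz homeomorphism of $(\R^2,0)$ preserves orders of tangency, $\mathrm{tord}(\Phi(\gamma_1),\Phi(\gamma_2))=\mathrm{tord}(\gamma_1,\gamma_2)$. This is standard: when $\mathrm{tord}(\gamma_1,\gamma_2)=\beta>1$ the leading exponent of $t\mapsto\mathrm{dist}(\gamma_1(t),\gamma_2)$ equals $\beta$, and this quantity, together with the distance-to-the-origin parameter, is distorted by at most a bounded factor under $\Phi$; the case $\beta=1$ is handled directly.

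Hence each $T_i'$ is a $\beta_i$-H\"older triangle, $\{T_i'\}$ is a H\"older complex with the same exponents, and $T_i\mapsto T_i'$ preserves or reverses the cyclic order according to whether $\Phi$ preserves or reverses orientation. Combining this with the arc-by-arc identity $\mathrm{ord}_{\Phi(\gamma)}(g)=\mathrm{ord}_\gamma(f)$ gives $Q_g(\Phi(\tilde T))=Q_f(\tilde T)$ as sets for every H\"older triangle $\tilde T\subset T_i$; in particular $Q_g(T_i')=Q_i$, and $\Phi$ matches, exponent for exponent, the triangles $\tilde T\subset T_i$ with $Q_f(\tilde T)$ a point and the triangles $\tilde T'\subset T_i'$ with $Q_g(\tilde T')$ a point. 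Taking infima, this upgrades Remark \ref{rem:order-width} to the relative widths: $\mu^*_{T_i'}(\Phi(\gamma),g)=\mu^*_{T_i}(\gamma,f)$ for every arc $\gamma\subset T_i$. The same matching shows $T_i'$ is elementary for $g$: two arcs of $T_i'$ of equal $g$-order pull back to arcs of $T_i$ of equal $f$-order, so $f$ has that order on the whole sub-triangle they bound, and pushing forward $g$ has it on the corresponding sub-triangle of $T_i'$. Finally the sign clause of $(\star)$ gives that $g$ has sign $s_i$ on $\mathrm{int}\,T_i'$ if $af\le g\circ\Phi\le bf$, and sign $-s_i$ if $af\le -g\circ\Phi\le bf$, the choice being uniform in $i$.

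Now set $\mathcal H'=\{\beta_i,\,Q_g(T_i'),\,s_i',\,\mu_i\}_{i\in I}$, realized by $\{T_i'\}$: the three conditions defining ``pizza associated with $g$'' are precisely the three displayed facts above, and $\mu_i$ is still affine with the minimum of its values at the endpoints of $Q_g(T_i')$ equal to $\beta_i$, since $Q_g(T_i')$ and $Q_i$ share endpoints by the order identity. The correspondence $i\mapsto i$ is then a combinatorial equivalence of $\mathcal H$ and $\mathcal H'$: the H\"older exponents agree; the signs agree up to the single global flip, giving condition (1); $Q_g(T_i')$ equals $Q_i$ or $-Q_i$ according to whether $\Phi$ preserves or reverses orientation, matching the two alternatives of condition (2) with the direction of $i\mapsto i$ on the complexes; and $\mu_i=\mu_i$ gives condition (3). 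The only delicate points are the bi-Lipschitz invariance of $\mathrm{tord}$, in particular for $1$-H\"older triangles, and the consistent bookkeeping of the direction of the segments $Q_i$ under an orientation-reversing $\Phi$; everything else is transport of structure along $\Phi$.
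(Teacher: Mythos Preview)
Your proof is correct and follows essentially the same approach as the paper: push the H\"older complex forward along the bi-Lipschitz homeomorphism supplied by the contact equivalence, and check that orders, elementariness, relative widths, and signs are preserved (the latter up to a global flip). The only cosmetic difference is that the paper works directly with the pair $(H,h)$ from Definition~\ref{defc0k}, while you use the map $\Phi$ from Theorem~\ref{thm:contact-equiv}; your treatment of the relative widths and of the direction of $Q_i$ under an orientation-reversing $\Phi$ is in fact more explicit than the paper's.
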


\begin{proof}[Proof of Theorem \ref{theorem2}] Let $\{T_i\}$ be a triangulation of the germ of $\R^2$ at zero corresponding to Definition 2.13.  Let $(H,h)$ be  a pair of bi-Lipschitz homeomorphisms
defining the Lipschitz contact equivalence between $f$ and $g$. We have the relation
$H((x,y),f(x,y)) = (h(x,y),g(h(x,y)))$.
Since $h$ is a bi-Lipschitz map, $T'_i=h(T_i)$ is also a $\beta_i$-H\"older triangle.
Let $\gamma$ be a definable arc in $T_i$. Since $H$ is also bi-Lipschitz,
$\rm{ord}_{\gamma}(f)=\rm{ord}_{h(\gamma)}(f)$.
Let $\gamma_1$ and $\gamma_2$ be two arcs in $T_i$. Since $H$ is
a bi-Lipschitz homeomorphism, $\rm{ord}_{\gamma_1}(f)=\rm{ord}_{h(\gamma_1)}(g)$ and
$\rm{ord}_{\gamma_2}(f)=\rm{ord}_{h(\gamma_2)}(g)$. Thus
$Q_f(T_i)=Q_g(T'_i)$ (as directed segments).
If $T_i$ is an elementary triangle with respect to $f$, then $T'_i$ is an
elementary triangle with respect to $g$, and $\mu_i$ is the width function for $T'_i$.
Note that, if the map $H$ preserves (respectively, reverses) the sign
of $f$ on some triangle, then it has to preserve (respectively, to reverse) the sign on each
triangle. Thus there exists a pizza $\mathcal{H}'$ associated with $g$ having all elements
same as $\mathcal{H}$ except, possibly, all signs $s_i$ reversed.
This completes the proof of Theorem \ref{theorem2}.
\end{proof}

\section{Simplification of Pizzas}

Let $\mathcal{H}=\{\beta_i,Q_i,s_i,\mu_i\}$ be an abstract  pizza. A \emph{ simplification } of $\mathcal{H}$ is a pizza $\tilde{\mathcal{H}}$ obtained from $\mathcal{H}$ using the following operations:

\smallskip
{\bf 1}. Let $\beta_i$ and $\beta_{i+1}$ be two consecutive  numbers of the formal  H\"older complex  of $\mathcal{H}$.  Suppose that $Q_i$ and $Q_{i+1}$ are not single points, and the following holds:
\begin{enumerate}
\item $s_i=s_{i+1}$;
\item $Q_i=[a_i,b_i]$, $Q_{i+1}=[a_{i+1},b_{i+1}]$  and either
 $a_i<b_i = a_{i+1}<b_{i+1}$ or $a_i>b_i = a_{i+1}>b_{i+1}$;
\item There exists an affine function $\tilde{\mu}: \mathbb{F} \to \mathbb{F}$ such that
 $\mu_j:= \tilde{\mu}|_{Q_j}$ for $j=i,i+1$.
\end{enumerate}

Then, we define a new pizza as follows:
\\
- For $j \leq i-1$ we set $\widetilde{\beta}_j := \beta_j$, $\tilde{\mu}_j := \mu_j$, $\tilde{s}_j := s_j$ ;
\\
- For $i+2 \leq j \leq k$  we set $\widetilde{\beta}_j := \beta_{j+1}$, $\tilde{\mu}_j := \mu_{j+1}$, $\tilde{s}_j := s_{j+1}$ ;
\\
- We define $\widetilde{\beta}_i := \min\{\beta_i,\beta_{i+1}\}$, $\tilde{s}_i := s_i=s_{i+1}$ and $\tilde{\mu}_i := \tilde{\mu}|_{Q_i\cup Q_{i+1}}$.

The new abstract pizza  now has only $k-1$ triangles instead of $k$.

\smallskip
\begin{remark} {\rm Notice that if $a_i<b_i$ and $a_{i+1}>b_{i+1}$ or $a_i>b_i$ and $a_{i+1}<b_{i+1}$, then we do not apply the simplification procedure.}
\end{remark}

\smallskip
{\bf 2}. Let $\beta_i$ and $\beta_{i+1}$ be a pair of consecutive numbers  in the formal H\"older complex of $\mathcal{H}$  such that at least one of the segments $Q_i$ and $Q_{i+1}$ is a point. Suppose that  $Q_i=[a,a]$,  $Q_{i+1}=[a,b]$ and $\beta_i\geq \mu_{i+1}(a)$. Then, we define $\tilde{\beta}_j$, $\tilde{s}_j$ and $\tilde{\mu}_j$ for $j\not\in\{i,i+1\}$, in the same way as in the previous case, and set $\tilde{\beta}_i=\beta_{i+1}$, $\tilde{s}_i := s_i=s_{i+1}$, $\tilde{Q}_i=Q_{i+1}$ and $\tilde{\mu}_i=\mu_{i+1}$.

If $Q_i=[a,b]$ and $Q_{i+1}=[b,b]$, the procedure is almost the same as before, the only difference is that we set $\tilde{\beta}_i=\beta_i$, $\tilde{Q}_i=Q_i$ and $\tilde{\mu}_i=\mu_i$.

A pizza is called \emph{simplified} if none of the operations above can be applied.
Any pizza can be simplified  applying the operations {\bf 1} and {\bf 2}.

\begin{proposition}
The combinatorial equivalence class of a resulting simplified pizza does not depend on the order of simplifications.
\end{proposition}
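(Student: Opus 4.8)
The plan is to treat the simplification procedure as an abstract rewriting system on pizzas and to apply Newman's Lemma. First I would observe that the rewriting terminates: each of the operations \textbf{1} and \textbf{2} reduces the number of zones by exactly one, and the number of zones is a positive integer, so no infinite chain of simplifications exists. Next, I would note that both simplification operations are defined purely in terms of the combinatorial data (which zones are consecutive, the directions of the segments $Q_i$, the signs $s_i$, the exponents $\beta_i$, and the affine functions $\mu_i$), all of which transform consistently under combinatorial equivalence; hence the one-step simplification relation descends to a well-defined relation on combinatorial equivalence classes of pizzas. By Newman's Lemma it then suffices to prove \emph{local confluence}: whenever a pizza $\mathcal{H}$ admits two simplifications $\mathcal{H}\to\mathcal{H}_1$ and $\mathcal{H}\to\mathcal{H}_2$, the pizzas $\mathcal{H}_1$ and $\mathcal{H}_2$ have a common simplification (up to combinatorial equivalence). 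The uniqueness of the simplified pizza up to combinatorial equivalence is exactly the resulting confluence statement.

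For local confluence I would distinguish cases according to the index sets of the two operations. Each operation acts on a pair of consecutive indices $\{i,i+1\}$. If the two pairs are disjoint, and non-adjacent so that neither operation touches a zone created by the other, then the two operations clearly commute: performing one does not change any of the data relevant to the other, and the two orders yield literally the same pizza. This disposes of all ``far apart'' cases.

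The main work — and the main obstacle — is the \emph{overlapping} case, where the two operations act on pairs sharing an index, say $\{i,i+1\}$ and $\{i+1,i+2\}$, i.e.\ both operations seek to merge zone $i+1$ with one of its neighbors. Here I would argue as follows. Whichever of operations \textbf{1} or \textbf{2} is used, each merge leaves the affine function on the (possibly longer) resulting segment equal to the unique affine extension that restricts to $\mu_{i+1}$ on $Q_{i+1}$; when $Q_{i+1}$ is non-degenerate this common affine function is determined by its restriction to $Q_{i+1}$, and when $Q_{i+1}$ is a point the relevant affine function comes from one of the non-degenerate neighbors and its value at the shared endpoint is unchanged by the other merge. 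Likewise, the monotonicity/direction hypotheses of the two operations together force $Q_i$, $Q_{i+1}$, $Q_{i+2}$ to be monotone in a single common direction (or reduce to compatible point-conditions), and the side-conditions of operation \textbf{2}, of the form $\beta_j\ge\mu(\cdot)$ evaluated at a shared endpoint, are therefore preserved by the other merge. Consequently, after performing either of the two operations the other one (suitably reindexed) is still applicable, and both orders produce a single merged zone with segment $Q_i\cup Q_{i+1}\cup Q_{i+2}$, sign $s_i=s_{i+1}=s_{i+2}$, affine function the common extension of $\mu_{i+1}$, and exponent $\min(\beta_i,\beta_{i+1},\beta_{i+2})$ by associativity of $\min$. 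Thus $\mathcal{H}_1$ and $\mathcal{H}_2$ have a common one-step successor, which is even stronger than required.

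I would organize the overlapping case into the sub-cases ``both of type \textbf{1}'', ``one of type \textbf{1} and one of type \textbf{2}'', and ``both of type \textbf{2}'' (the last further split by the left/right position of the point zone), checking in each that the three ingredients — extension of the affine function, compatibility of the direction hypotheses, and invariance of the $\beta\ge\mu$ side-conditions under the other merge — go through; this is routine but must be done exhaustively, and it is the only delicate part of the argument. Combining the disjoint case, the overlapping case, termination, and Newman's Lemma completes the proof.
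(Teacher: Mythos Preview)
Your approach via Newman's Lemma is correct and is genuinely different from the paper's. The paper does not analyze the rewriting system at all; instead it characterizes the normal form directly. It observes that in a simplified pizza every consecutive pair $(i,i+1)$ must fall into one of three ``non-mergeable'' configurations (distinct non-constant affine pieces, or a point zone whose width is strictly smaller than the adjacent value of $\mu$), and then asserts that these break points---and hence the maximal mergeable runs they delimit---are determined by the initial pizza independently of how one simplifies. In other words, the paper identifies an intrinsic invariant (the set of ``genuine'' breaks in the piecewise-affine width data) and reads the simplified pizza off from it.

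Your route trades that single insight for a systematic case check: termination is immediate, and local confluence handles disjoint and overlapping pairs. The advantage of your method is that it is mechanical and does not require spotting the intrinsic description of the normal form; the advantage of the paper's method is that it is much shorter and gives a concrete picture of what the minimal pizza looks like. One small imprecision in your sketch: in the overlapping sub-case where the middle zone $Q_{i+1}$ is a point absorbable into either neighbour by operation~\textbf{2}, the two one-step results are not a single merged zone on $Q_i\cup Q_{i+1}\cup Q_{i+2}$ but rather the \emph{same} pizza with the point zone deleted (operation~\textbf{2} keeps the non-point zone's data unchanged, and the side condition $\beta_{i+1}\ge\mu$ forces $\min(\beta_i,\beta_{i+1})=\beta_{\text{non-point}}$); local confluence is then trivial there since $\mathcal{H}_1=\mathcal{H}_2$. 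With that adjustment your case analysis goes through.
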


\begin{proof} If we apply the simplification procedure until it cannot be applied, any two consecutive elements
 indexed by $i$ and $i+1$ must have one of the following properties.

\begin{enumerate}
\item The affine functions $\mu_i$ and $\mu_{i+1}$ are non-constant and they are not restrictions of the same affine function to two adjacent segments;
\item $Q_i=[a,a]$ is a point, $Q_{i+1}=[a,b]$ is not a point, and $\mu_i(a)<\mu_{i+1}(a)$;
\item $Q_i=[a,b]$ is not a point, $Q_{i+1}=[b,b]$ is a point, and $\mu_i(b)>\mu_{i+1}(b)$.
\end{enumerate}

The corresponding maximal segments are unique.
Their order depends only on the initial pizza, and does not depend on the simplification procedure.
\end{proof}

The pizza $\tilde{\mathcal{H}}$ obtained from $\mathcal{H}$ by the operations described above is called a \emph{simplification} of $\mathcal{H}$. The pizza $\mathcal{H}$ is called a \emph{refinement} of $\tilde{\mathcal{H}}$.

In geometric terms, the simplification procedure can be described as follows. Let us consider the germ of a definable continuous function $f$ and an abstract pizza associated with $f$. Let $\{T_i\}$ be the corresponding triangulation of the germ $(\R^2,0)$. Suppose that the width functions of two consecutive H\"older triangles $T_i$ and $T_{i+1}$ is same affine function, let us say $\tilde{\mu}: \mathbb{F} \to \mathbb{F}$,  restricted to adjacent segments $Q(T_i)$ and $Q(T_{i+1})$. Then, one considers a union of these H\"older triangles as a H\"older triangle with the minimal exponent. The width function of the new triangle is the restriction of $\mu$ to $Q(T_i)\cup Q(T_{i+1})$. This proves the following result.

\begin{lemma}\label{lemma4}
Let $\mathcal{H}$ be a pizza associated with a function $f$. If $\widetilde{\mathcal{H}}$ is a simplification of $\mathcal{H}$,
then $\widetilde{\mathcal{H}}$ is also a pizza associated with $f$.
\end{lemma}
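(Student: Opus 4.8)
\textbf{Proof proposal for Lemma \ref{lemma4}.}

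The plan is to analyze the two simplification operations separately and, in each case, produce explicitly the H\"older complex witnessing that the simplified abstract pizza $\widetilde{\mathcal{H}}$ is associated with $f$. Since $\mathcal{H}$ is associated with $f$, fix a H\"older complex $\{T_i\}_{i\in I}$ as in the definition: each $T_i = T(\gamma_i,\gamma_{i+1})$ is a $\beta_i$-H\"older triangle elementary with respect to $f$, with $Q(T_i)=Q_i$, the width function of $T_i$ equal to $\mu_i$, and the sign of $f$ on the interior of $T_i$ equal to $s_i$. It suffices to treat a single application of operation \textbf{1} or \textbf{2} at indices $i, i+1$; the general case follows by induction on the number of simplification steps, using that (by the preceding Proposition) the order of simplifications does not affect the combinatorial equivalence class of the outcome, together with the fact that ``associated with $f$'' is preserved under combinatorial equivalence (this last point is immediate from the definitions, or can be read off from Theorem \ref{theorem2} applied to $f$ itself). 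The only new triangles in $\widetilde{\mathcal{H}}$ are $\widetilde{T}_i := T_i\cup T_{i+1} = T(\gamma_i,\gamma_{i+2})$; all other triangles are unchanged, so their data transfer verbatim.

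For operation \textbf{1}: by hypothesis the two segments $Q_i=[a_i,b_i]$ and $Q_{i+1}=[a_{i+1},b_{i+1}]$ are directed the same way and share the endpoint $b_i=a_{i+1}$, so $Q_i\cup Q_{i+1}$ is a genuine directed segment, and the common sign $s_i=s_{i+1}$ means $f$ has constant sign on the interior of $\widetilde{T}_i$ (the shared side $\gamma_{i+1}$ lies in the closure and contributes nothing to the interior sign). First I would check that $\widetilde{T}_i$ is elementary with respect to $f$: given two disjoint arcs $\delta_1,\delta_2\subset\widetilde{T}_i$ with $\rm{ord}_{\delta_1}(f)=\rm{ord}_{\delta_2}(f)=q$, I need that $f$ has order $q$ on every arc in $T(\delta_1,\delta_2)$. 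If both $\delta_j$ lie in the same $T_i$ or $T_{i+1}$ this is elementarity of that triangle; otherwise $\gamma_{i+1}$ separates them, so $q\in Q_i\cap Q_{i+1}=\{b_i\}$, hence $\rm{ord}_{\gamma_{i+1}}(f)=b_i=q$ and the triangle $T(\delta_1,\delta_2)$ is covered by $T(\delta_1,\gamma_{i+1})\cup T(\gamma_{i+1},\delta_2)$, on each of which elementarity of $T_i$ resp. $T_{i+1}$ forces order $q$. Next, $Q_f(\widetilde{T}_i)=Q_i\cup Q_{i+1}$ directly from the definition of $Q_f$ as a union over arcs. Finally I would verify that the width function of $\widetilde{T}_i$ equals $\widetilde{\mu}|_{Q_i\cup Q_{i+1}}$: for an arc $\gamma\subset\widetilde{T}_i$ with $\rm{ord}_\gamma(f)=q$, if $q$ is an interior point of $Q_i$ (resp. $Q_{i+1}$) then any narrow enough H\"older triangle through $\gamma$ realizing $\mu^*$ stays inside $T_i$ (resp. $T_{i+1}$), so $\mu^*_{\widetilde{T}_i}(\gamma)=\mu^*_{T_i}(\gamma)=\mu_i(q)=\widetilde{\mu}(q)$; the boundary case $q=b_i=a_{i+1}$ follows since $\mu_i(b_i)=\mu_{i+1}(a_{i+1})=\widetilde{\mu}(b_i)$ by hypothesis (3), and enlarging a witnessing triangle across $\gamma_{i+1}$ cannot decrease the width below this common value. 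Thus $\widetilde{\mu}_i$ is the width function of $\widetilde{T}_i$ and $\min(\widetilde\mu_i(a_i),\widetilde\mu_i(b_{i+1}))=\min(\beta_i,\beta_{i+1})=\widetilde\beta_i$ is its exponent, as required.

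For operation \textbf{2}, say $Q_i=[a,a]$ is a point and $Q_{i+1}=[a,b]$ with $\beta_i\ge\mu_{i+1}(a)$: here $Q_f(T_i)=\{a\}$ means $f$ has constant order $a$ on all of $T_i$, in particular $\rm{ord}_{\gamma_i}(f)=\rm{ord}_{\gamma_{i+1}}(f)=a$ and $s_i=s_{i+1}$ (the sign cannot change inside $\widetilde T_i$; if $s_i=0$ then $f\equiv 0$ on $T_i$ and again constant sign holds). The triangle $\widetilde{T}_i=T(\gamma_i,\gamma_{i+2})$ has $Q_f(\widetilde T_i)=\{a\}\cup Q_{i+1}=Q_{i+1}$, and $\widetilde\beta_i=\beta_{i+1}=\rm{tord}(\gamma_{i+1},\gamma_{i+2})$ must be shown to be its exponent, i.e. $\rm{tord}(\gamma_i,\gamma_{i+2})=\beta_{i+1}$; this holds because $\rm{tord}(\gamma_i,\gamma_{i+1})=\beta_i\ge\mu_{i+1}(a)$ and $\mu_{i+1}(a)$ is the width of the side $\gamma_{i+1}$ inside $T_{i+1}$, which (by the Remark following Lemma \ref{lem:width-affine}) is realized by some arc, forcing $\rm{tord}(\gamma_i,\gamma_{i+1})\ge\rm{tord}(\gamma_{i+1},\gamma_{i+2})$ or the collapse of the relevant orders — the condition $\beta_i\ge\mu_{i+1}(a)$ is exactly what guarantees that attaching $T_i$ to $T_{i+1}$ does not create new width behavior and $\rm{tord}(\gamma_i,\gamma_{i+2})=\beta_{i+1}$. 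Elementarity of $\widetilde T_i$ and the identity of its width function with $\mu_{i+1}$ then follow as in case \textbf{1}, using that every arc in $\widetilde T_i$ with $\rm{ord}(f)=q>a$ necessarily lies in $T_{i+1}$ (since $T_i$ only carries order $a$) so its relative width is unchanged, while for $q=a$ the condition $\beta_i\ge\mu_{i+1}(a)$ ensures the maximal relative-width triangle through such an arc is the one dictated by $\mu_{i+1}$. The symmetric sub-case $Q_i=[a,b]$, $Q_{i+1}=[b,b]$ is handled identically with the roles reversed.

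\textbf{Main obstacle.} The delicate point is the width-function computation at the \emph{shared boundary arc} $\gamma_{i+1}$, i.e. showing that merging $T_i$ and $T_{i+1}$ does not alter $\mu^*$ for arcs near $\gamma_{i+1}$. One must argue that a H\"older triangle $\widetilde T$ with $\gamma\subset\widetilde T\subset\widetilde T_i$ and $Q_f(\widetilde T)$ a point, whose width beats $\widetilde\mu(q)$, would have to straddle $\gamma_{i+1}$ and thereby violate either the single-valuedness of $\mu_i$/$\mu_{i+1}$ (which equals $\rho$, proved affine and single-valued in Lemma \ref{lem:width-affine}) or the compatibility hypotheses (3) in operation \textbf{1}, resp. $\beta_i\ge\mu_{i+1}(a)$ in operation \textbf{2}. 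This is precisely the place where the hypotheses of the two simplification operations are used in an essential way, and it is where the geometric picture (a single maximal affine zone versus two artificially split ones) has to be converted into a rigorous statement about orders of tangency; once this is in hand, the remaining verifications are the routine bookkeeping sketched above.
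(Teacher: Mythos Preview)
Your approach is correct and is essentially the same as the paper's: merge the two adjacent H\"older triangles into one and check that the data of the simplified pizza is realized on the union. The paper's own proof consists only of the short paragraph immediately preceding the lemma, which asserts the geometric interpretation (the union $T_i\cup T_{i+1}$ is a H\"older triangle with the minimal exponent, and its width function is the restriction of $\tilde\mu$ to $Q(T_i)\cup Q(T_{i+1})$) without spelling out the elementarity check or the width computation near the shared boundary arc; your write-up supplies exactly those verifications that the paper leaves implicit.
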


The last Lemma allows us to define a notion of a \emph{minimal pizza} associated with a function $f$ as a
simplification of any pizza associated with $f$.

\begin{example}\label{example1}
{\rm Let us define $f$ as $f = x^4+y^2$ if $x\ge 0$ and $f = x^2 + y^2$ if $x\le 0$.

For $\alpha\ge 1$, let $\gamma=\{y=a x^\alpha+o(x^\alpha),\;x\ge 0\}$ be an arc parameterized by $x$.
If $\alpha\ge 2$ then $\rm{ord}_\gamma(f)=4$, otherwise $\rm{ord}_\gamma(f)=2\alpha$.
This implies that $Q_f(T)=[4,4]$ and $\mu_{T}(4)=2$ for any H\"older triangle $T$ bounded by
$\gamma_1=\{y=a_1 x^2+o(x^2),\;x\ge 0\}$ and
$\gamma_2=\{y=a_2 x^2+o(x^2),\;x\ge 0\}$, and $Q_f(T')=[2,2]$ and $\mu_{T'}(2)=1$ for
any H\"older triangle $T_3$ bounded by two arcs not tangent to the positive $x$-axis
and containing the negative $x$-axis.

Any H\"older triangle $T_1$ bounded by an arc $\gamma_1=\{y=a x^\alpha+o(x^\alpha),\;x\ge 0,\,\alpha\ge 2\}$
and an arc $\gamma_2$ not tangent to the positive $x$-axis is elementary, with $Q_f(T_1)$ either $[2,4]$ or $[4,2]$,
and $\mu_{T_1}(q)=q/2$. The minimal pizza for $f$ consists of any such triangle $T_1$
and its complementary triangle $T_2$ bounded by the same two arcs, with $Q_f(T_2)=-Q_f(T_1)$
(the two segments have opposite directions) and $\mu_{T_2}(q)=q/2$.
Any two such pizzas are equivalent.}
\end{example}
\begin{example}\label{example2}
{\rm Let us define $f$ as $f= x^4+y^2$ if $y\ge 0$, and $f= x^4 + y^4$ if $y\le 0$.

Any H\"older triangle $T_1$ bounded by an arc $\gamma_1=\{y=a x^\alpha+o(x^\alpha),\;x\ge 0,\,\alpha\ge 2\}$
and an arc $\gamma_2$ in the upper half plane not tangent to the $x$-axis is elementary, with
$Q_f(T_1)=[4,2]$ and $\mu_{T_1}(q)=q/2$. The minimal pizza for $f$ consists of any such triangle $T_1$, a triangle $T_2$ in the upper half plane bounded by $\gamma_2$ and an arc
$\gamma_3=\{y=b|x|^\alpha+o(|x|^\alpha),\;x\le 0,\,\alpha\ge 2\}$, with $Q_f(T_2)=[4,2]$ and $\mu_{T_2}(q)=q/2$,
and a triangle $T_3$ bounded by the arcs $\gamma_3$ and $\gamma_1$ and containing the negative $y$-axis,
with $Q_f(T_3)=[4,4]$ and $\mu_{T_3}(4)=1$.
Note that $\mu_{T_1}(4)\ne\mu_{T_3}(4)\ne\mu_{T_2}(4)$.
Any two such pizzas are equivalent.}
\end{example}
\begin{example}\label{example3}
{\rm Let us define $f$ as $f = y^2-x^3$ for $x\ge 0$ and $x^2+y^2$ for $x\le 0$.
The function $f$ is invariant under the symmetry $(x,y) \to (x,-y)$. For simplicity,
we define a decomposition of
the upper half-plane $\{y\ge 0\}$ and complete it using the symmetry.

Let $\gamma_1=\{y=x^{3/2}, x\geq 0\}$ be the zero set of $f$ in the upper half-plane.
Let $T$ (resp., $T'$) be the H\"older triangle in the upper half-plane
bounded by $\gamma_1$ and the positive (resp., negative) $x$-axis.
Then $Q_f(T)=[3,\infty]$ and $Q_f(T')=[\infty,2]$.
Both $T$ and $T'$ are elementary triangles, with $f<0$ in $T$ and $f>0$ in $T'$.
If $\gamma=\{y=x^{3/2}+a x^\alpha+o(x^\alpha),\;x\ge 0,\,a\ne 0\}$
where $\alpha\ge 3/2$, then $q=\rm{ord}_\gamma(f)=3/2+\alpha$.
If, however, $a>0$ and $1\le\alpha\le 3/2$, then $q=\rm{ord}_\gamma(f)=2\alpha$.
This implies that $\mu_{T}(q)=q-3/2$, but $\mu_{T'}(q)$ is not affine.
If we partition $T'$ by an arc $\gamma_2=\{y=a x^{3/2}+o(x^{3/2}),\;a>1,\;x\ge 0\}$
into triangles, $T_2$ bounded by $\gamma_1$ and $\gamma_2$, and $T_3$
bounded by $\gamma_2$ and the negative $x$-axis, then
$Q_f(T_2)=[\infty,3]$, $\mu_{T_2}(q)=q-3/2$, $Q_f(T_3)=[3,2]$ and $\mu_{T_3}(q)=q/2$,
thus $\mu(q)$ is affine in both $T_2$ and $T_3$.
The minimal pizza for $f$ consists of triangles $T_1=T,\,T_2,\,T_3$ and
their symmetric triangles in the lower half-plane.
Note that the positive $x$-axis in this decomposition can be replaced by any arc
$\gamma=\{y=a x^\alpha,\;x\ge 0\}$ where either $\alpha>3/2$ or $\alpha=3/2$ and $|a|<1$,
and the negative $x$-axis can be replaced by any arc that is not tangent to the
positive $x$-axis.}
\end{example}
\begin{example}\label{example4}
{\rm Although the function $q\mapsto\mu(q)$ in Examples \ref{example1}-\ref{example3}
is always increasing in $q$, that is not always the case.
Consider, for example, $g(x,y)=(x^6+y^6)/f(x,y)$ where
$f(x,y)$ is the function from Example \ref{example1}.
Since $\rm{ord}_\gamma(x^6+y^6)=6$ for any arc $\gamma$, we have
$\rm{ord}_\gamma(g)=6-\rm{ord}_\gamma(f)$ for any $\gamma$.
This implies that a pizza for $g$ can be obtained from the pizza for $f$
by replacing $\mu_T(q)$ with $\mu_T(6-q)$ for any triangle $T$.
In particular, for any of the two triangles $T_1$ and $T_2$ in Example \ref{example1},
the function $\mu=q/2$ should be replaced with $\mu=3-q/2$.}
\end{example}

Although we just saw that a pizza associated with a function germ can never be unique,
the next section ensures that a minimal pizza is unique up to combinatorial equivalence.

The procedure of geometric refinement may be described by the same way as geometric simplification. We take a pizza $\tilde{\mathcal{H}}$ associated with the germ of a definable continuous function $f$. Suppose that  $\{\tilde{T}_i\}$ is a H\"older  complex associated with $\tilde{\mathcal{H}}$. Let $\{T_j\}$ be a refinement of $\{\tilde{T}_i\}$. Since $\tilde{T}_i$ are elementary triangles, the same is true for the triangles $T_j$. The structure of the pizza associated with the new triangulation can be obtained using the procedure described in Section 2. It is clear that $\mathcal{H}$ is a refinement of $\tilde{\mathcal{H}}$.

\begin{theorem} A minimal pizza associated with the germ of a definable continuous function $f$ is unique
up to combinatorial equivalence.
\end{theorem}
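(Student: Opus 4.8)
The plan is to derive uniqueness from two facts already in hand: the order‑independence of the simplification procedure (the Proposition in Section~4) and the stability of the class ``pizza associated with $f$'' under simplification (Lemma~\ref{lemma4}). The bridge between two arbitrary pizzas of $f$ will be a \emph{common refinement} that is still a pizza associated with $f$. Once such a common refinement $\mathcal H^{*}$ is produced, every minimal pizza of $f$ is the endpoint of some maximal chain of simplifications starting from $\mathcal H^{*}$, so the order‑independence Proposition forces all minimal pizzas of $f$ to lie in one combinatorial class. Since, by definition, a minimal pizza associated with $f$ is exactly a simplification of some pizza associated with $f$, this is all that is needed.

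First I would build $\mathcal H^{*}$. Let $\mathcal H_{1}$ and $\mathcal H_{2}$ be pizzas associated with $f$, carried by H\"older complexes $\{T^{1}_{i}\}$ and $\{T^{2}_{j}\}$. Let $\{S_{k}\}$ be the H\"older complex whose boundary arcs are precisely the finitely many arcs that occur as a side of some $T^{1}_{i}$ or of some $T^{2}_{j}$, listed in cyclic order; any two cyclically consecutive such arcs bound a H\"older triangle, so $\{S_{k}\}$ is a genuine H\"older complex, and each $S_{k}$ lies inside a single $T^{1}_{i}$ and inside a single $T^{2}_{j}$. A H\"older subtriangle of an elementary triangle is elementary (two arcs of equal $f$‑order in $S_{k}$ bound a H\"older triangle inside the elementary $T^{1}_{i}$, on which $f$ then has constant order), so every $S_{k}$ is elementary with respect to $f$; moreover $S_{k}$ inherits from $T^{1}_{i}$ the preparation structure used in the proofs of Theorem~\ref{theorem1} and Lemma~\ref{lem:width-affine}, so the argument of Lemma~\ref{lem:width-affine} applied to $S_{k}$ gives $\mu_{S_{k}}=\mu_{T^{1}_{i}}|_{Q_{f}(S_{k})}$, an affine function. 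Equipping $\{S_{k}\}$ with the data $\bigl(\beta_{S_k},\,Q_{f}(S_{k}),\,\mathrm{sign}(f|_{S_{k}}),\,\mu_{S_{k}}\bigr)$ therefore yields, exactly as in the proof of Theorem~\ref{theorem1}, a pizza $\mathcal H^{*}$ associated with $f$ whose H\"older complex refines both $\{T^{1}_{i}\}$ and $\{T^{2}_{j}\}$.

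Next I would check that $\mathcal H^{*}$ is a refinement of $\mathcal H_{1}$ in the sense of Section~4 (and likewise of $\mathcal H_{2}$). Pass from $\{T^{1}_{i}\}$ to $\{S_{k}\}$ by inserting the extra boundary arcs one at a time. Each insertion splits one elementary triangle $T=T(\gamma_{1},\gamma_{2})$ along an interior arc $\gamma_{0}$ into $T'=T(\gamma_{1},\gamma_{0})$ and $T''=T(\gamma_{0},\gamma_{2})$, and $q_{0}:=\rm{ord}_{\gamma_{0}}(f)\in Q_{f}(T)$. I claim that merging $T'$ and $T''$ back into $T$ is literally an instance of operation~{\bf 1} or operation~{\bf 2}. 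If $q_{0}$ is interior to $Q_{f}(T)$, then $Q_{f}(T')$ and $Q_{f}(T'')$ are two adjacent non-degenerate subsegments meeting at $q_{0}$, the width functions $\mu_{T'},\mu_{T''}$ are restrictions of the single affine $\mu_{T}$, the sign of $f$ agrees on both, and the ultrametric property of orders of tangency together with $\beta_{T'},\beta_{T''}\ge\beta_{T}$ gives $\min(\beta_{T'},\beta_{T''})=\beta_{T}$: this is operation~{\bf 1}. If $q_{0}$ is an endpoint of $Q_{f}(T)$, then one of $T',T''$ has $Q_{f}$ equal to the point $\{q_{0}\}$ while the other retains $Q_{f}(T)$; using $\mu_{T'}^{*}(\gamma)\ge\mu_{T}^{*}(\gamma)$ for $\gamma\subset T'\subset T$ (Definition~\ref{df:width}) and the single-valuedness of $\mu_{T}$ (Lemma~\ref{lem:width-affine}), the exponent of the degenerate piece is $\ge\mu_{T}(q_{0})$, which, together with $\min(\beta_{T'},\beta_{T''})=\beta_{T}$, is exactly the hypothesis of operation~{\bf 2}. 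Hence the chain ``$\mathcal H^{*}\to\cdots\to\mathcal H_{1}\to\cdots\to\mathcal M_{1}$'' (first undoing the insertions, then simplifying $\mathcal H_{1}$ to a minimal pizza $\mathcal M_{1}$) and the analogous chain ending in a minimal pizza $\mathcal M_{2}$ of $\mathcal H_{2}$ are both maximal simplification sequences starting from $\mathcal H^{*}$; by the order-independence Proposition of Section~4 their endpoints $\mathcal M_{1}$ and $\mathcal M_{2}$ are combinatorially equivalent. As every minimal pizza associated with $f$ is the endpoint of such a chain, this proves the theorem.

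The main obstacle I anticipate is exactly the case analysis of the third paragraph: verifying that splitting one elementary triangle always produces a configuration the simplification procedure can undo in one step, i.e.\ a genuine instance of {\bf 1} or {\bf 2}, rather than some intermediate configuration. This reduces to three elementary observations — the monotonicity $\mu_{T'}^{*}(\gamma)\ge\mu_{T}^{*}(\gamma)$ for $\gamma\subset T'\subset T$, the identity $\min(\beta_{T'},\beta_{T''})=\beta_{T}$ for any subdivision of a H\"older triangle, and the single-valuedness and attainment of $\mu_{T}$ from Lemma~\ref{lem:width-affine} — but the remaining bookkeeping (which of $T',T''$ is degenerate, the orientation of the segments $Q_{f}(T')$, $Q_{f}(T'')$, and the two sub-cases of operation~{\bf 2}) must be laid out carefully; in a final write-up I would organize it by the position of $q_{0}$ in $Q_{f}(T)$ rather than grind through it here.
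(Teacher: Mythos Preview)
Your proposal is correct and follows essentially the same route as the paper: build a common refinement of the two H\"older complexes, observe that it carries a pizza associated with $f$ of which both given pizzas are simplifications, and invoke the order-independence Proposition. The paper's own proof is much terser—it simply appeals to the ``geometric refinement procedure'' described just before the theorem and asserts that the result is a refinement in the abstract sense—whereas you actually carry out the case analysis (interior $q_0$ $\leadsto$ operation~{\bf 1}, endpoint $q_0$ $\leadsto$ operation~{\bf 2}) that the paper sweeps under ``it is clear.''
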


\begin{proof} Let $\tilde{\mathcal{H}}_1$ and $\tilde{\mathcal{H}}_2$ be two minimal pizzas corresponding to the same germ of a definable continuous function $f$. Let $\{\tilde{T}_{1,i}\}$ and $\{\tilde{T}_{2,k}\}$ be two H\"older complexes associated with $\tilde{\mathcal{H}}_1$ and $\tilde{\mathcal{H}}_2$, respectively. Consider a new H\"older complex $\{T_s\}$ obtained as a common refinement of $\{\tilde{T}_{1,i}\}$ and $\{\tilde{T}_{2,k}\}$. Using the geometric refinement procedure, one can construct a pizza $\mathcal{H}$ corresponding to the triangulation $\{T_s\}$. Then, the pizzas $\tilde{\mathcal{H}}_1$ and $\tilde{\mathcal{H}}_2$ are simplifications of the same pizza. Since the combinatorial equivalence class of a minimal pizza does not depend on the order of simplification operations, $\tilde{\mathcal{H}}_1$ and $\tilde{\mathcal{H}}_2$ are combinatorially equivalent.
\end{proof}

\begin{theorem} Two definable function germs $f,g\colon (\R^2,0)\rightarrow\R$ are contact Lipschitz
equivalent if, and only if, their minimal pizzas are combinatorially equivalent.
\end{theorem}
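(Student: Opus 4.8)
The plan is to establish the two implications separately. For necessity, suppose $f$ and $g$ are Lipschitz contact equivalent and let $\mathcal H$ be a minimal pizza associated with $f$. By Theorem~\ref{theorem2} there is a pizza $\mathcal H'$ associated with $g$ and combinatorially equivalent to $\mathcal H$. The conditions that trigger the simplification operations \textbf{1} and \textbf{2} are formulated purely in terms of the data $s_i$, $Q_i$, $\mu_i$, all of which are preserved by combinatorial equivalence, up to the global sign change, reversal of the cyclic order, and reversal of all the directed segments $Q_i$ that the definition permits; hence ``being simplified'' is a combinatorial invariant, and $\mathcal H'$ is simplified. A simplified pizza associated with $g$ is a minimal pizza of $g$, so by uniqueness of the minimal pizza (established above), the minimal pizza of $g$ is combinatorially equivalent to $\mathcal H'$, hence to $\mathcal H$, the minimal pizza of $f$.

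For sufficiency, assume the minimal pizzas of $f$ and $g$ are combinatorially equivalent, and choose H\"older complexes $\{T_i\}_{i\in I}$ and $\{T'_i\}_{i\in I}$ realizing them, indexed compatibly through the combinatorial equivalence so that the exponents $\beta_i$, the segments $Q_i$ (up to direction), the signs $s_i$ (up to one global sign $\varepsilon=\pm1$), and the affine width functions $\mu_i$ all correspond; write $T_i=T(\gamma_i,\gamma_{i+1})$, $T'_i=T(\gamma'_i,\gamma'_{i+1})$, each elementary with respect to the relevant function. The first step is to construct, for each $i$, a definable bi-Lipschitz homeomorphism $h_i\colon T_i\to T'_i$ with $\gamma_i\mapsto\gamma'_i$, $\gamma_{i+1}\mapsto\gamma'_{i+1}$, such that $\mathrm{ord}_{h_i(\gamma)}(g)=\mathrm{ord}_\gamma(f)$ for \emph{every} arc $\gamma\subset T_i$ and $|g\circ h_i|\asymp|f|$ on $T_i$ with constant sign $\varepsilon\,\mathrm{sign}(f)$. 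Here I would use the prepared forms $f=(y-\theta)^{\lambda}a(x)u$ on $T_i$ and $g=(y-\theta')^{\lambda'}a'(x)u'$ on $T'_i$ of Theorem~\ref{thm:prep}, arranged so the tangency curves lie on the boundary or outside the triangles: by formula~(\ref{order-f}), $\mu_i(q)=(q-r)/\lambda$ with $a(x)\sim cx^{r}$, and $\mu_i=\mu'_i$ on a non-degenerate segment forces $\lambda=\lambda'$ and $r=r'$ (the degenerate case, $Q_i$ a point, is handled directly, since matching of $\beta_i$ and $Q_i$ makes $f|_{T_i}$ and $g|_{T'_i}$ comparable to the same power of the distance to the origin). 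After a bi-Lipschitz change of coordinates on $T_i$ and $T'_i$ matching their common exponent $\beta_i$ and sending the tangency curves to a common model arc, $f$ and $g$ become comparable to one and the same monomial in the new coordinates, and a linear map between these coordinate models yields $h_i$.

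The second step is to glue the $h_i$ into a global map. For each separating arc one fixes in advance a definable bi-Lipschitz identification $\gamma_i\simeq\gamma'_i$ preserving the distance to the origin---which exists because every definable arc so parametrized is bi-Lipschitz equivalent to an interval---and adjusts each $h_i$ to restrict to it on $\gamma_i$ and $\gamma_{i+1}$. The resulting $h=\Phi\colon(\R^2,0)\to(\R^2,0)$ is a well-defined homeomorphism (consistency around the origin coming from the combinatorial equivalence being an orientation-preserving or orientation-reversing bijection of the cyclically ordered families of triangles) and is bi-Lipschitz by the standard gluing lemma for bi-Lipschitz maps of H\"older triangles along common boundary arcs. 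By construction $\Phi$ satisfies condition $(\star)$ of Theorem~\ref{thm:contact-equiv}, so when $f$ and $g$ are Lipschitz we conclude by that theorem. For arbitrary continuous definable germs, one upgrades $(\star)$ to a genuine contact equivalence as in Definition~\ref{defc0k} by building the fibered homeomorphism $H(x,y,z)=(\Phi(x,y),G(x,y,z))$ piece by piece over the triangles $T_i$ from the same prepared forms: on each $T_i$ both $f$ and $g\circ\Phi$ are comparable to the same monomial model, which lets one choose $G$ fixing $\R^2\times\{0\}$, carrying $\mathrm{graph}(f)$ onto $\mathrm{graph}(g\circ\Phi)$, and bi-Lipschitz, and then glue the pieces.

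I expect the heart of the difficulty to be the sufficiency direction, and within it the requirement that $h_i$ preserve $\mathrm{ord}(f)$ along \emph{all} arcs of $T_i$ while remaining bi-Lipschitz: this is precisely where the equalities $\lambda=\lambda'$, $r=r'$ extracted from $\mu_i=\mu'_i$ are used, and where the prepared form must be leveraged to reduce $f$ to its monomial model by a bi-Lipschitz ambient change. Two further points need care: controlling the distortion of the unit $u$ in~(\ref{prep}), whose derivatives need not be bounded when $f$ has small order, in the construction of $H$; and the bi-Lipschitz gluing, both of the $h_i$ and of the corresponding pieces $H_i$, along the separating arcs.
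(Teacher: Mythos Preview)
Your necessity argument coincides with the paper's: both apply Theorem~\ref{theorem2} to transport a minimal pizza of $f$ to a pizza of $g$, observe that the simplification conditions are purely combinatorial so the transported pizza is already simplified, and conclude by uniqueness of the minimal pizza.

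For sufficiency your route is genuinely different, and more laborious, than the paper's. You construct the bi-Lipschitz map $h$ piece by piece on each $T_i$ via the prepared forms, extracting $\lambda=\lambda'$ and $r=r'$ from the equality $\mu_i=\mu'_i$ so that $f$ and $g$ reduce to comparable monomial models, and then glue along the boundary arcs. The paper instead takes \emph{any} definable bi-Lipschitz $h$ sending the H\"older complex $\{T_i\}$ onto $\{T'_i\}$---such an $h$ exists simply because the two complexes have the same abstract type---sets $H(x,y,z)=(h(x,y),\pm z)$, and argues by contradiction that $\tilde f:=\pm f\circ h^{-1}$ and $g$ have bounded ratio: if not, along some arc $\gamma\subset T'_i$ the orders would differ, but both $\tilde f$ and $g$ make $T'_i$ elementary with the \emph{same} width function $\mu_i$, and the width of $\gamma$ with respect to either function equals $\mathrm{tord}(\gamma,\gamma_i)$ for the marked boundary side $\gamma_i$; since $\mu_i$ is affine and nonconstant one inverts it to recover $\mathrm{ord}_\gamma$ from this tord, forcing $\mathrm{ord}_\gamma(\tilde f)=\mathrm{ord}_\gamma(g)$. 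Thus the paper never needs the explicit per-triangle construction, the matching of preparation data, or the gluing lemma; its whole sufficiency proof fits in a paragraph. What your approach buys is transparency---the equality of orders is built in rather than deduced---and it sidesteps the somewhat terse justification in the paper of the identity ``width $=\mathrm{tord}(\gamma,\gamma_i)$'' for an arbitrary elementary triangle not obtained directly from a preparation. Your closing observation, that $(\star)$ alone yields contact equivalence via Theorem~\ref{thm:contact-equiv} only when $f$ and $g$ are Lipschitz and that a direct construction of $H$ is otherwise needed, is a point the paper passes over.
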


\begin{proof} If $f$ and $g$ are contact Lipschitz equivalent, then by Theorem \ref{theorem2} and
Lemma \ref{lemma4},  a minimal pizza of $f$ is a minimal pizza of $g$. Indeed if the pizza of $g$
(obtained from  Theorem \ref{theorem2}) were not minimal, any simplification would also result in a
simplification of the minimal pizza of $f$, which contradicts the definition. Thus respective minimal pizzas
of $f$ and $g$ are combinatorially equivalent.

\smallskip
If the minimal pizza of $f$ is combinatorially equivalent to the minimal pizza of $g$, then there exists a definable
bi-Lipschitz map $h\colon (\R^2,0)\rightarrow (\R^2,0)$ transforming the triangulation $\{T_i\}$, associated with $f$,
to the triangulation $\{T'_i\}$, associated with $g$. Let $H\colon (\R^3,0)\rightarrow (\R^3,0)$ defined by
$H(x,y,z)=(h(x,y),z)$ if the signs $s_i$ of the minimal pizza of $f$ are the same as the signs $s'_i$ of
the minimal pizza of $g$, and $H(x,y,z):=(h(x,y),-z)$ if the signs are opposite. The mapping $H$ transforms
the graph of $f$ into the graph of a function $\tilde{f}$. We are going to show that ${\tilde{f}(x,y)}/{g(x,y)}$
is bounded away from zero and infinity on the set of points where the functions are not zero. Notice that, by the
construction of $H$, the zero-sets of $\tilde{f}$ and $g$ are the same.

Let us suppose that $\tilde{f}/g$ is unbounded or tends to zero. Since $\tilde{f}$
and $g$ are definable, there exists an arc $\gamma$ such that $\tilde{f}/g$
on $\gamma$ is unbounded or tends to zero. But, by construction of the map $H$, the width of the arc $\gamma$ with respect to the functions $\tilde{f}$ and $g$ is equal to $\rm{tord}(\gamma,\gamma_i)$, where $\gamma_i$ is the marked boundary arc of the simplex $T_i$ such $\gamma\subset T_i$. That is why $\rm{ord}_{\gamma}(\tilde{f})=\rm{ord}_{\gamma}(g)$,  so that
$\tilde{f}/g$ is bounded below and above along $\gamma$.
This contradiction completes the proof.
\end{proof}

\section{Geometric realization of abstract pizzas}

Remind that we fixed a polynomially bounded o-minimal structure. In this section, we show that any abstract pizza can be realized as a geometric pizza associated with the germ of a definable function $f\colon(\R^2,0)\rightarrow (\R,0)$.

\begin{lemma}\label{connect} Let $T$ be a germ at the origin of a definable H\"older triangle with sides $\gamma_1$ and $\gamma_2$.
Let $f_1, f_2\colon  T,0\rightarrow \R,0$ be two nonnegative definable continuous functions such that $\rm{ord}_{\gamma_1}(f_1)=\rm{ord}_{\gamma_2}(f_2)=q\ge 0$.
There exists a nonnegative, definable, continuous in $T\setminus\{0\}$ function $f$  such that its restriction to $\gamma_i$ coincides with $f_i$, for $i=1,2$.
Moreover, $\rm{ord}_{\gamma}(f)=q$ for any arc $\gamma\subset T$.
If $q>0$ then the limit of $f$ at the origin is zero, thus $f$ is continuous in $T$.
If $q=0$ then $f$ is a unit, i.e., its values in $T\setminus\{0\}$ are separated from $0$ and $\infty$.
\end{lemma}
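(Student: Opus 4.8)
The plan is to build $f$ as a convex combination, with definable weights depending only on the position of the point inside $T$, of two auxiliary ``radially extended'' functions that by construction already have order $q$ along \emph{every} arc of $T$.

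First I would introduce, for $i=1,2$ and $p$ in a small representative of $T$, the function
\[
\phi_i(p):=f_i\bigl(\gamma_i(|p|)\bigr),
\]
the value of $f_i$ at the point of $\gamma_i$ lying at distance $|p|$ from the origin. Each $\phi_i$ is nonnegative, definable, and continuous on $T\setminus\{0\}$, being the composition of $p\mapsto|p|$, of $\gamma_i$ and of $f_i$; and it agrees with $f_i$ along $\gamma_i$ since $|\gamma_i(t)|=t$. The key point of this definition is that for any arc $\gamma\subset T$ (parametrized by arc length) one has $\phi_i(\gamma(t))=f_i(\gamma_i(t))=a_it^q+o(t^q)$, where $a_i\neq0$ because $\mathrm{ord}_{\gamma_i}(f_i)=q$, and in fact $a_i>0$ because $f_i\ge0$. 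Hence $\mathrm{ord}_\gamma(\phi_i)=q$ for \emph{every} arc $\gamma\subset T$.

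Next I would glue $\phi_1$ and $\phi_2$ by means of the distance functions $d_i(p):=\mathrm{dist}(p,\gamma_i)$, which are definable and continuous and satisfy $d_1+d_2>0$ on $T\setminus\{0\}$ because $\gamma_1\cap\gamma_2=\{0\}$. Setting $\lambda:=d_2/(d_1+d_2)\in[0,1]$, I would define
\[
f:=\lambda\,\phi_1+(1-\lambda)\,\phi_2 .
\]
Then $f$ is nonnegative, definable, and continuous on $T\setminus\{0\}$; on $\gamma_1$ we have $d_1\equiv0$, hence $\lambda\equiv1$ and $f=\phi_1=f_1$, and symmetrically $f=f_2$ on $\gamma_2$. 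For an arbitrary arc $\gamma\subset T$, writing $\ell(t):=\lambda(\gamma(t))\in[0,1]$, the displayed formula gives
\[
f(\gamma(t))=t^q\bigl(\ell(t)\,a_1(1+o(1))+(1-\ell(t))\,a_2(1+o(1))\bigr),
\]
and since $a_1,a_2>0$ the second factor stays between two positive constants for all small $t$; because $f$ is definable, this forces $\mathrm{ord}_\gamma(f)=q$. Finally $\min(\phi_1,\phi_2)\le f\le\max(\phi_1,\phi_2)$ on $T\setminus\{0\}$, so if $q>0$ the $\phi_i$ tend to $0$ at the origin and $f$ extends continuously by $f(0)=0$, while if $q=0$ the $\phi_i$ tend to the positive limits $a_i$, so $f$ is bounded and bounded away from $0$ near the origin, i.e.\ a unit.

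The only points needing a moment of care are the two positivity facts $a_i>0$ and $d_1+d_2>0$ on $T\setminus\{0\}$; both are immediate, the first from $f_i\ge0$ and the second from the sides of a H\"older triangle meeting only at the origin (they are parametrized by the distance to the origin). Note that no normalization of $T$ is required, since the construction uses nothing about $T$ besides $\gamma_1\cap\gamma_2=\{0\}$.
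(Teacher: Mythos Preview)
Your argument is correct and follows the same idea as the paper's proof: build $f$ as a convex combination of the boundary data, with a definable interpolation parameter varying from $0$ on one side to $1$ on the other, so that along every arc the leading term is a weighted average of two strictly positive quantities and hence has order exactly $q$.

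The only difference is in the implementation. The paper first normalizes $T$ by a definable bi-Lipschitz map to the region $\{0\le y\le\gamma(x),\ x\ge 0\}$ and then takes the coordinate-based weight $s=y/\gamma(x)$ and the vertical projections $f_1(x,0)$, $f_2(x,\gamma(x))$. You instead work intrinsically, using the radial extensions $\phi_i(p)=f_i(\gamma_i(|p|))$ and the distance-based weight $\lambda=d_2/(d_1+d_2)$. Your route avoids the normalization step and makes the order computation along an arbitrary arc slightly cleaner (since $\phi_i(\gamma(t))$ literally equals $f_i(\gamma_i(t))$, independent of $\gamma$); the paper's route yields a more explicit formula in coordinates. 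Both lead to the same conclusion with the same amount of work.
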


\begin{proof}
We may assume, by a definable bi-Lipschitz transformation,
that $T$ is bounded by the positive $x$-axis and a curve $y=\gamma(x)$
where $\gamma(x)\sim x^\beta$ for small $x>0$.
We define $f$ by linear interpolation:
$f(x,y)=(1-s)f_1(x,0)+s f_2(x,\gamma(x))$
where $s=y/\gamma(x)\in[0,1]$.
One can easily check that this function satisfies conditions of Lemma \ref{connect}.
\end{proof}

\begin{lemma}\label{explicit} Let $\mu\colon [q_1,q_2]\to\F\cap [1,\infty]$ be an affine function with the image $[\beta,\tilde{\beta}]$, where $0<q_1\le q_2$, $\beta\le\tilde\beta$ and $\beta<\tilde\beta\Leftrightarrow q_1<q_2$.
Let $T$ be a definable $\beta$-H\"older triangle.
Then, there exists a definable continuous function $f\colon (T,0)\rightarrow(\R,0)$ such that $\mu$ is the width function of $T$ associated with $f$.
\end{lemma}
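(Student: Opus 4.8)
The plan is to reduce the problem to the monomial case and then glue using Lemma \ref{connect}. First I would put $T$ in normal form: by a definable bi-Lipschitz transformation assume $T$ is bounded by the positive $x$-axis $\gamma_1$ and a curve $\gamma_2=\{y=c x^\beta,\;x\ge 0\}$ with $c>0$. Let $\mu(q)=\rho q+\sigma$ be the affine function in question (with $\rho\ge 0$ when $q_1<q_2$, and the degenerate constant case when $q_1=q_2$). The key observation is the formula from Equation (\ref{order-f}): if we want an arc $\gamma=\{y=b x^\delta+o(x^\delta)\}$ with $\delta=\mathrm{tord}(\gamma,\gamma_1)\in[\beta,\tilde\beta]$ to satisfy $\mathrm{ord}_\gamma(f)=q$ and $\mu_T^*(\gamma)=\delta$, it suffices (by Lemma \ref{lem:width-affine}) that $f$ be prepared along $\gamma_1$ in the form $f(x,y)=y^\lambda a(x)u(x,y)$ with $a(x)\sim x^r$, $\lambda q_1$-type data chosen so that $\lambda\delta+r=q$ for all admissible $\delta$; solving, one takes $\lambda=1/\rho$ and $r=-\sigma/\rho$ when $\rho>0$, i.e. $f(x,y)=x^{r}\,y^{1/\lambda}$ up to the unit $u$ — more precisely $f(x,y)=x^{r}\,\psi(y)$ where $\psi$ is a definable function with $\psi(y)\sim y^{1/\rho}$ chosen in the field of exponents $\F$ (the existence of such a power function is exactly where polynomial boundedness and the field $\F$ enter). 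One must check $r\ge 0$ (equivalently $\mu(q)\ge 1$ on $[q_1,q_2]$, which is the hypothesis $\mu\colon[q_1,q_2]\to[1,\infty]$) so that $f$ extends continuously to $\gamma_1$.

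Next I would verify directly that this candidate $f$ has the desired width function. For an arc $\gamma=\{y=bx^\delta+o(x^\delta),\;x\ge 0\}$ in $T$ with $\beta\le\delta\le\tilde\beta$, one computes $\mathrm{ord}_\gamma(f)=r+\delta/\rho=\mu^{-1}$-image, i.e. $\delta=\mu(\mathrm{ord}_\gamma(f))$; this shows $Q_f(T)$ is the segment with endpoints $\mathrm{ord}_{\gamma_1}(f)$ and $\mathrm{ord}_{\gamma_2}(f)$, and by the argument of Lemma \ref{lem:width-affine} that $T$ is elementary with respect to $f$ and $\mu_T=\mu$. The endpoint values are $\mathrm{ord}_{\gamma_1}(f)$ (the $\delta=\beta$ case, giving $\mu(\,\cdot\,)=\beta$, i.e. $\mathrm{ord}=q_1$ or $q_2$ depending on orientation) and $\mathrm{ord}_{\gamma_2}(f)$ (the $\delta=\tilde\beta$ case); matching these to $\{q_1,q_2\}$ is just the bookkeeping that $\min(\mu(q_1),\mu(q_2))=\beta$ forces. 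The degenerate case $q_1=q_2$, $\beta<\tilde\beta$ is excluded by hypothesis, while $q_1=q_2$, $\beta=\tilde\beta$ is trivial (take $f=x^{q_1}$, or more simply $f=|(x,y)|^{q_1}$, any function with $Q_f(T)=[q_1,q_1]$ and width $\beta$); and the case $\lambda=\infty$, i.e. $\rho=0$ with $\mu$ constant equal to $\beta$, is likewise the elementary constant-width case handled by a monomial $x^{q_1}$ after the normalization.

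Finally, I would assemble the general statement: the function built above is continuous on $T\setminus\{0\}$ and, since $q_1>0$ forces $\mathrm{ord}_\gamma(f)\ge q_1>0$ on every arc, it extends continuously by $0$ at the origin, so $f\colon(T,0)\to(\R,0)$ is as required; if the orientation of $Q_f(T)$ needs to be reversed relative to $[q_1,q_2]$ one simply reflects the triangle, swapping the roles of $\gamma_1$ and $\gamma_2$. The main obstacle I anticipate is not the interpolation — Lemma \ref{connect} handles any residual gluing if one prefers to prescribe values on both sides — but rather the clean verification that the explicitly chosen prepared function realizes $\mu$ as its \emph{exact} width function, i.e. that no shorter H\"older triangle around an interior arc can have $Q_f$ a point; this is precisely the $\mu_T\le\rho$ and $\mu_T\ge\rho$ dichotomy already proved in Lemma \ref{lem:width-affine}, applied to our $f$, so the work is to confirm the hypotheses of that lemma ($f$ prepared with respect to $y$ on $T$, no arc in $T$ tangent to the $y$-axis, $\gamma_1$ transverse to the $y$-axis) hold after the normalization.
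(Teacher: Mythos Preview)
Your overall strategy---normalize $T$, write down a prepared monomial $f(x,y)\approx x^{r}y^{\lambda}$, and invoke Lemma~\ref{lem:width-affine}---is the same as the paper's, but there is a genuine gap in the construction when $\tilde\beta<\infty$. After your normalization, an arc $\gamma\subset T$ has $\delta=\mathrm{tord}(\gamma,\gamma_1)$ ranging over all of $[\beta,\infty]$, not just $[\beta,\tilde\beta]$ as you write; in particular $\gamma_1=\{y=0\}$ itself lies in $T$ with $\delta=\infty$. Your candidate $f(x,y)=x^{r}\psi(y)$ with $\psi(y)\sim y^{1/\rho}$ vanishes identically on $\gamma_1$, so $\mathrm{ord}_{\gamma_1}(f)=\infty$, and more generally $\mathrm{ord}_\gamma(f)=r+\delta/\rho$ sweeps out $[r+\beta/\rho,\infty]$. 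Thus $Q_f(T)$ is an unbounded segment rather than the prescribed $[q_1,q_2]$, and the width function you realize is the affine extension of $\mu$ to all of $[\beta,\infty]$, not $\mu$ on $[q_1,q_2]$. Your formula is correct precisely in the special case $\tilde\beta=\infty$ (equivalently $q_2=\infty$).

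The paper's fix is to replace $y$ by $y+\rho(x)$ with $\rho(x)\sim x^{\tilde\beta}$: for arcs with $\delta\ge\tilde\beta$ one then has $y+\rho(x)\sim x^{\tilde\beta}$, so the order of $f$ is capped at the endpoint value and all such arcs acquire width exactly $\tilde\beta$, while for $\delta<\tilde\beta$ the computation is as in your formula. This same shift is what handles the decreasing case $d\mu/dq<0$, which does occur (see Example~\ref{example4}); your parenthetical ``$\rho\ge 0$ when $q_1<q_2$'' is not true in general, and with a negative exponent $\lambda$ a bare power $y^{\lambda}$ would blow up on $\gamma_1$, whereas $(y+\rho(x))^{\lambda}$ remains finite throughout $T$. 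Incidentally, your claimed equivalence ``$r\ge 0\Leftrightarrow \mu\ge 1$ on $[q_1,q_2]$'' is false (e.g.\ $\mu(q)=q+1$ on $[1,2]$ gives $r=-1$), though in fact $r<0$ causes no continuity problem since $y\le c\,x^{\beta}$ in $T$ forces $x^{r}y^{1/\rho}\to 0$.
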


\begin{proof} We may assume, by a definable bi-Lipschitz transformation,
that $T$ is bounded by the positive $x$-axis and a curve $y=\gamma(x)$
where $\gamma(x)\sim x^\beta$ for small $x>0$.

If $q_1=q_2$ and $\beta=\tilde\beta$, we define $f(x,y)=a(x)$
where $a(x)$ is any definable function such that $a(x)\sim x^{q_1}$ for small $x>0$.

Suppose now that $q_1<q_2$ and $\beta<\tilde\beta$.
Note that $\tilde\beta=\infty\Leftrightarrow q_2=\infty$.
In that case, $f(x,y)=b(y)a(x)$ where $\lambda={\rm d}\mu/{\rm d}q$, $b(y)\sim y^\lambda$, for small $y>0$,
and $a(x)\sim x^{q_1-\lambda\beta}$ for small $x>0$.

If $q_1<q_2<\infty$ and $\beta<\tilde\beta<\infty$,
let $\rho(x)$ be a definable function such that $\rho(x)\sim x^{\tilde\beta}$
for small $x>0$.
If ${\rm d}\mu/{\rm d}q>0$ then $f(x,y)=b(y+\rho(x))a(x)$
where $\lambda=(q_2-q_1)/(\tilde\beta-\beta)$, $b(z)\sim z^\lambda$, for small $z>0$, and
$a(x)\sim x^r$ with $r=(q_1\tilde\beta-q_2\beta)/(\tilde\beta-\beta)$
satisfies conditions of Lemma \ref{explicit}.

If ${\rm d}\mu/{\rm d}q<0$ then $f(x,y)=b(y+\rho(x)) a(x)$
where $\lambda=(q_1-q_2)/(\tilde\beta-\beta)$, $b(z)\sim z^\lambda$, for small $z>0$ and
$a(x)\sim x^r$ with $r=(q_2\tilde\beta-q_1\beta)/(\tilde\beta-\beta)$
satisfies conditions of Lemma \ref{explicit}.
\end{proof}

\begin{theorem}[Theorem of Realization] Let $\mathcal{H}=\{\beta_i,Q_i,s_i,\mu_i\}_{i\in I}$ be an abstract pizza
(see Definition \ref{abstract}). Then, there exists the germ of a definable continuous function $\psi\colon (\R^2,0)\rightarrow (\R,0)$ such that $\mathcal{H}$ is associated with $\psi$.
\end{theorem}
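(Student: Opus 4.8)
The plan is to realize $\mathcal H$ in two stages: first build a geometric H\"older complex with the prescribed exponents $\{\beta_i\}$, then construct on it a definable continuous function $\psi$ whose restriction to each triangle realizes the triple $(Q_i,s_i,\mu_i)$, patching the local pieces along the common sides. For the first stage, since $\{\beta_i\}$ is an abstract H\"older complex (at least one $\beta_i=1$), there is a definable H\"older complex $\{T_i\}_{i\in I}$ on $(\R^2,0)$ with $T_i=T(\gamma_i,\gamma_{i+1})$ a $\beta_i$-H\"older triangle and the arcs $\gamma_i$ cyclically ordered; this is the standard realization of abstract H\"older complexes (cf.\ \cite{B}), e.g.\ by induction on the number of triangles. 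By a definable bi-Lipschitz change of coordinates we normalize each $T_i$, as in Lemmas \ref{connect} and \ref{explicit}, so that it is bounded by the positive $x$-axis and a curve $y\sim x^{\beta_i}$.

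\emph{Local functions.} Fix $i$. If $s_i=0$ then necessarily $Q_i=[\infty,\infty]$ and $\mu_i\equiv\beta_i$, and we set $f_i\equiv0$ on $T_i$ (the zero function on a $\beta_i$-triangle has constant width function $\beta_i$). If $s_i\neq0$, we apply Lemma \ref{explicit} to $T_i$ with the affine function $\mu_i$ on $Q_i$: its hypotheses hold, since $\beta_i=\min(\mu_i(a_i),\mu_i(b_i))$ is the exponent of $T_i$ and $Q_i$ is a point exactly when $\mu_i$ is constant; the two orderings $a_i<b_i$ and $a_i>b_i$ are covered by the cases ${\rm d}\mu/{\rm d}q\gtrless0$ of that lemma, and by choosing the normalization accordingly we may arrange $\mathrm{ord}_{\gamma_i}(f_i)=a_i$ and $\mathrm{ord}_{\gamma_{i+1}}(f_i)=b_i$. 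Multiplying the resulting nonnegative function by $s_i$, we obtain a definable continuous $f_i\colon(T_i,0)\to(\R,0)$ with $T_i$ elementary for $f_i$, $Q_{f_i}(T_i)=Q_i$ as a directed segment, $\mu^*_{T_i}(\gamma)=\mu_i(\mathrm{ord}_\gamma(f_i))$ for every $\gamma\subset T_i$, and $\mathrm{sign}\,f_i=s_i$ on $\mathrm{int}\,T_i$. The $f_i$ need not agree on the common sides $\gamma_i$; but by the continuity condition $a_{i+1}=b_i$ of Definition \ref{abstract} they have the same order on $\gamma_i$, and by condition (3) the same sign there whenever $a_i\neq\infty$.

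\emph{Gluing.} Consider a common side $\gamma_i$ of $T_{i-1}$ and $T_i$. If $a_i=b_{i-1}=\infty$ then $f_{i-1}$ and $f_i$ vanish identically on $\gamma_i$, so they already agree there. If $a_i<\infty$, then — inspecting the explicit shape of the functions produced by Lemma \ref{explicit} — the order $a_i$ is attained by $f_{i-1}$ (resp.\ $f_i$) not only on $\gamma_i$ but on a whole family of interior arcs of $T_{i-1}$ (resp.\ $T_i$) accumulating to $\gamma_i$. Pick such arcs $\delta_i^-\subset\mathrm{int}\,T_{i-1}$ and $\delta_i^+\subset\mathrm{int}\,T_i$, arbitrarily close to $\gamma_i$, and let $N_i=T(\delta_i^-,\delta_i^+)$, split by $\gamma_i$ into collars $N_i^-\subset T_{i-1}$, $N_i^+\subset T_i$ of exponent $\mathrm{tord}(\gamma_i,\delta_i^\pm)$ chosen larger than every $\beta_j$, so that the $N_i$ are pairwise disjoint and $N_i$ meets no $T_j$ other than $T_{i-1},T_i$. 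Since $\mathrm{ord}_{\delta_i^-}(f_{i-1})=\mathrm{ord}_{\delta_i^+}(f_i)=a_i$ and the two functions have the common sign $s_i$ on $\delta_i^\pm$, Lemma \ref{connect} (applied to $|f_{i-1}|$ and $|f_i|$ and then restoring the sign $s_i$) produces a definable $F_i$ on $N_i$, continuous and tending to $0$ at the origin, with $F_i|_{\delta_i^-}=f_{i-1}|_{\delta_i^-}$, $F_i|_{\delta_i^+}=f_i|_{\delta_i^+}$, and $\mathrm{ord}_\gamma(F_i)=a_i$ for every $\gamma\subset N_i$. Now set
\[
\psi=f_i \text{ on } T_i\setminus\bigcup\nolimits_j N_j,\qquad \psi=F_i \text{ on } N_i .
\]
Then $\psi$ is a well-defined definable germ $(\R^2,0)\to(\R,0)$, continuous away from $0$ and tending to $0$ at $0$, hence a definable continuous function germ. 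To see that $\mathcal H$ is associated with $\psi$ via $\{T_i\}$: on $T_i\setminus\bigcup_jN_j$ we have $\psi=f_i$, so orders, relative widths and signs along bulk arcs are inherited from $f_i$; the collars, carrying the endpoint order $a_i$ (already in $Q_i$) and having exponent larger than every $\beta_j$, neither enlarge $Q_{f_i}(T_i)$ nor affect the infimum defining $\mu^*_{T_i}$ of a bulk arc, while for an arc inside a collar or equal to $\gamma_i$ one checks directly that its order is $a_i$ (resp.\ $b_{i-1}$) and its relative width is $\mu_i$ of that order; elementarity of $T_i$ for $\psi$ follows from that for $f_i$ together with the fact that the collar only adds order-$a_i$ arcs near the side $\gamma_i$; and the sign of $\psi$ on $\mathrm{int}\,T_i$ is $s_i$. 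This verifies conditions (1)--(3) in the definition of a pizza associated with a function.

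\emph{Main obstacle.} The technical heart is the gluing step: the local pieces $f_i$ must be assembled into a \emph{genuinely continuous} function — with exact agreement on each $\gamma_i$, not merely equal order — while leaving $Q_i$, $\mu_i$, and the elementarity of every $T_i$ undisturbed. What makes this possible is that for functions of the explicit form supplied by Lemma \ref{explicit} the common endpoint order $a_i=b_{i-1}$ is realized on interior arcs accumulating to $\gamma_i$ from both sides, so the constant-order gluing of Lemma \ref{connect} can be performed inside an arbitrarily thin collar straddling $\gamma_i$; the remaining effort is the bookkeeping that inserting such a thin high-exponent collar preserves all of the pizza data on $T_{i-1}$ and $T_i$.
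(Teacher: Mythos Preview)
Your proof is correct and follows the paper's overall architecture: realize the abstract H\"older complex, build the local functions $f_i$ on each $T_i$ via Lemma~\ref{explicit}, then glue using Lemma~\ref{connect}. The difference lies entirely in the gluing step.

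The paper glues \emph{multiplicatively}: since $b_i=a_{i+1}<\infty$ forces ${\rm ord}_{\gamma_i}(f_{i+1}/f_i)=0$, one applies Lemma~\ref{connect} with $q=0$ to produce a definable unit $g_i$ on all of $T_i$ with $g_i|_{\gamma_{i-1}}\equiv 1$ and $g_i|_{\gamma_i}\equiv f_{i+1}|_{\gamma_i}/f_i|_{\gamma_i}$, and sets $\psi_i=f_i g_i$. Because $g_i$ has order zero on every arc, multiplication by $g_i$ leaves every arc-order unchanged, so $Q$, $\mu$, elementarity, and sign are preserved automatically with no further argument. You instead glue by \emph{collar interpolation}: you apply Lemma~\ref{connect} with $q=a_i$ on a thin triangle $N_i$ straddling $\gamma_i$, replacing $f_{i-1},f_i$ there by a function of constant order $a_i$. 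This is legitimate, but it obliges you to verify that the modification does not disturb the pizza data on $T_{i-1}$ and $T_i$; the cleanest reason this holds (which you use implicitly) is that, by elementarity of $T_i$ for $f_i$ together with ${\rm ord}_{\gamma_i}(f_i)={\rm ord}_{\delta_i^+}(f_i)=a_i$, every arc in the collar $N_i^+$ already had order $a_i$ under $f_i$, so after the replacement the arc-orders on $T_i$ are literally unchanged and hence so are $Q$, $\mu_{T_i}$, and elementarity. Your condition that the collar exponent exceed every $\beta_j$ is needed only to make the $N_i$ pairwise disjoint, not (as your wording suggests) to preserve the width; it would be worth stating the elementarity argument explicitly. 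In short, the paper's multiplicative correction buys you preservation of the pizza data for free, while your collar method is equally valid but trades that for an extra verification step.
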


\begin{proof} Consider the H\"older complex associated with $\mathcal{H}$ and let us realize it in $\R^2$ at $0$.
Let $\{T_i\}_i$ be the set of the H\"older triangles from that realization, and
let $\gamma_i$ be the common boundary of $T_i$ and $T_{i+1}$.
Lemma \ref{explicit} allows us to construct definable continuous functions $f_i$ on each $T_i$ such that
$\mu_i:Q_i=[a_i,b_i]\to\F\cap\{\infty\}$ is the width function associated with $f_i$ on $T_i$,
the sign of $f_i$ inside $T_i$ is $s_i$,
${\rm ord}_{\gamma_{i-1}}(f_i)=a_i$ and ${\rm ord}_{\gamma_i}(f_i)=b_i$.
If $b_i=a_{i+1}<\infty$ then the function $\rho_i=(f_{i+1}|_{\gamma_i})/(f_i|_{\gamma_i})$
is positive and ${\rm ord}_{\gamma_i}(\rho_i)=0$.
Lemma \ref{connect} implies that there exists a continuous in $T_i\setminus\{0\}$ definable function
$g_i$ such that ${\rm ord}_\gamma(g_i)=0$ on each arc $\gamma$ in $T_i$, such that
$g_i|_{\gamma_{i-1}}\equiv 1$ and $g_i|_{\gamma_i}\equiv\rho_i$.
Then $\psi_i=f_i g_i$ is a continuous in $T_i$ definable function such that $\mu_i$
is the width function associated with $\psi_i$ in $T_i$, and $\psi_i|_{\gamma_i}\equiv \psi_{i+1}|_{\gamma_i}$.
Thus the function $\psi$ such that $\psi|_{T_i}=\psi_i$ is continuous, and $\mathcal{H}$ is
associated with $\psi$.
\end{proof}

\end{document}